\numberwithin{equation}{section}
\newcommand{\HilbScheme}[2]{\mathbf{Hilb}_{#1}^{#2}}
\newcommand{\Aox}{A[x_0,\mathbf x]}
\newcommand{\Ax}{A[\mathbf x]}
\newcommand{\Kox}{K[x_0,\mathbf x]}
\newcommand{\Kx}{K[\mathbf x]}
\newcommand{\Proj}{\textnormal{Proj}\,}
\newcommand{\Spec}{\textnormal{Spec}\,}
\newcommand{\sat}{{\textnormal{sat}}}
\newcommand{\reg}{\textnormal{reg}}
\newcommand{\id}{\mathfrak}
\newcommand{\hilb}{{\mathcal{H}\textnormal{ilb}}}
\newcommand{\PP}{\mathbb{P}}
\newcommand{\Mf}{\mathcal M\mathrm f}
\newcommand{\supp}{\mathrm{supp}}
\newcommand{\Ht}{\mathrm{Ht}}
\newcommand{\cN}{\mathcal{N}}
\newcommand{\Nf}{\mathrm{Nf}}
\newcommand{\SGred}{\xrightarrow{\ \id G_{\mathcal P}\ }}
\newtheorem{Lemma}{Lemma}[section]
\newtheorem{theorem}[Lemma]{Theorem}
\newtheorem{Corollary}[Lemma]{Corollary}
\newtheorem{Proposition}[Lemma]{Proposition}
\theoremstyle{definition}
\newtheorem{Notation}[Lemma]{Notation}
\newtheorem{Remark}[Lemma]{Remark}
\newtheorem{Definition}[Lemma]{Definition}
\newtheorem{Example}[Lemma]{Example}
\DeclareMathAlphabet{\mathpzc}{OT1}{pzc}{m}{it}
\begin{document}

\markboth{C.~Bertone, F.~Cioffi, M.~Roggero}
{Macaulay-Like Marked Bases}

\title{Macaulay-like marked bases}

\author[]{C. Bertone}
\email{cristina.bertone@unito.it, margherita.roggero@unito.it}
\address{Dip. di Matematica dell'Universit\`a di Torino, Torino, Italy}

\author[]{F. Cioffi}
\email{cioffifr@unina.it}
\address{Dip. di Matematica e Applicazioni dell'Universit\`a di Napoli \lq\lq Federico II\rq\rq, Napoli, Italy}

\author[]{M. Roggero}
\thanks{The three authors were partially supported by GNSAGA (INdAM, Italy). The second and third authors were partially supported by the framework of PRIN 2010-11 \lq\lq Geometria delle variet\`a\ algebriche\rq\rq, cofinanced by MIUR}

\keywords{quasi-stable ideal, polynomial reduction relation, Macaulay bases, homogenization of ideals, Hilbert scheme.}

\subjclass[2010]{13P10, 14Q20, 14C05}

\begin{abstract}
We define marked sets and bases over a quasi-stable ideal $\id j$ in  a polynomial ring on a Noetherian $K$-algebra, with $K$ a field of any characteristic. The involved polynomials may be non-homogeneous,  but their degree is bounded from above by the maximum among the degrees of the terms in the Pommaret basis of $\id j$ and a given integer $m$. Due to the combinatorial properties of quasi-stable ideals, these bases  behave well with respect to homogenization, similarly to Macaulay bases. We prove that the family of marked bases over a given quasi-stable ideal has an affine scheme structure, is  flat and, for large enough $m$, is an open subset of a Hilbert scheme. Our main results lead to algorithms that explicitly construct such a family. 
We compare our method with similar ones and give some complexity results.
\end{abstract}

\maketitle

\section*{Introduction}

The investigation of rewriting procedures that give rise to efficient division algorithms has a longstanding tradition. A rich literature attests to the many results and applications that have been obtained in this field. Although the theory of Gr\"obner bases is maybe the most famous topic in this context, many studies have been devoted also to term-order-free procedures (see \cite{Mour,Ro,CMR13} and the references therein). Both approaches lead to  the study and the parameterization of families of ideals $I$ in a polynomial ring $S$ such that $S/I$ has a basis consisting of all the terms outside a given monomial ideal. When the ideals are homogeneous,  these families become a useful tool for the description of Hilbert scheme features (e.g.~\cite{CF,NS,RT,LR,BLR}). Under certain hypotheses, these families are interesting in themselves because they have useful properties, such as flatness \cite{BaMu,CR} or connectedness \cite{FR,LR}. 

Nevertheless, as noted in \cite[Remark 1.6]{CEVV}, for Hilbert schemes of points, every point has an open neighborhood that can be studied by suitable \lq \lq affine\rq\rq\ techniques. In the same context, a similar approach is also used in \cite[Chapter 18]{MS}. This paper is devoted to the study of a rewriting procedure in an affine framework, i.e.~the involved polynomials and ideals need not be homogeneous.  We construct  polynomial ideal bases that are well-suited for the investigation of open subsets of any Hilbert scheme (see \cite[Subsection 1.4]{BLR}, Proposition \ref{prop:immersione} and Theorem \ref{th:flatness}). To our knowledge, in the literature there are no term-order-free polynomial bases for non-homogeneous ideals of any dimension with the same properties.

Our results are inspired by previous works in a homogeneous  setting  \cite{CR,BCLR,CMR13}. Here, we consider a field of arbitrary characteristic and quasi-stable ideals. These monomial ideals play an analogous role to that of initial ideals in the Gr\"obner bases theory, but their combinatorial structure allows one to avoid the use of a term order. The starting argument to move from the homogeneous to the affine framework is the good behavior with respect to saturation of quasi-stable ideals. This behavior is inherited by the homogeneous ideals of the family corresponding to the given quasi-stable ideal (see Theorem \ref{bastaaffine}). Geometrically, this means that the corresponding schemes have no components contained in the hyperplane at infinity. As a consequence we recover the well-known result that the points of a Hilbert scheme corresponding to Cohen-Macaulay schemes form an open subset (Corollary \ref{cor:CM} and Remark \ref{rem:CM}).

We define marked sets and bases over a given quasi-stable ideal $\id j$ and a related Noetherian confluent rewriting procedure (Definitions \ref{marked affini} and \ref{relazione affine}). These definitions  depend on a positive given integer $m$ and on the Pommaret basis of $\id j$, which determine an upper bound  on  the degree of  the polynomials in every  marked set and basis.  We show that every  such a marked basis generates an ideal whose homogenization is computationally determined by the basis itself (Theorem \ref{omogbene}), with an evident similarity to the behavior of Macaulay bases \cite[Theorem 4.3.19]{KR2}. We parameterize the family of ideals that are marked over the given quasi-stable ideal by an affine scheme which we explicitly construct (Theorem \ref{thm:schemamarcatoA}). The bound on the degree of the polynomials in the marked sets guarantees that the number of parameters needed for the construction of the scheme is finite. We prove that the corresponding family is flat and that, if the bound on the degree is large enough, this scheme is an open subset of a suitable Hilbert scheme (Proposition \ref{prop:immersione} and Theorem \ref{th:flatness}). In an affine context, the property of flatness is not immediate, even for families of ideals with the same Hilbert polynomial and parameterized over an irreducible scheme (Example \ref{ex:no flat}).
  
The constructive proofs in this affine setting provide a better understanding of some results already obtained in the homogeneous one in \cite{BCLR, LR2}. For instance, Theorem \ref{critsupermin} highlights which parameters are needed in the parameterization of the given families and Theorem \ref{ro2} explains the role of the satiety of the hyperplane section of the quasi-stable ideal.  

The last section is devoted to a detailed description of the algorithms arising from our results, including a variant for the zero-dimensional case proved in Subsection \ref{conseguenze2}.
In particular, we give an example in which we investigate the irreducible components of a Hilbert scheme of curves (Example \ref{ex:tommasino}), compare our method with similar ones (Subsection \ref{subsec:comparison}) and give some complexity results. The latter consist of a bound on the degree of the polynomials involved in a reduction and a bound on the maximal length of a sequence of terms connected by a single step of reduction (Theorem \ref{th:bound}). This last result allows us to give also  an evaluation of the maximal degree of the generators of the ideal defining the affine scheme that parameterizes the marked family (Corollary \ref{cor:grado equazioni}).


\section{Preliminaries}

Given a field $K$, we will denote the polynomial ring $K[x_1,\dots,$ $x_{n}]$ by $\Kx$ and the polynomial ring $K[x_0,x_1,\dots,x_n]$ by $\Kox$, letting $\mathbf x:=\lbrace x_1,\dots,x_{n}\rbrace$. For any $K$-algebra $A$, $A[\mathbf x]$ will denote the polynomial ring $A\otimes_{K} \Kx$ and $\Aox$ will denote $A\otimes_{K} \Kox$. Obviously, $\Ax$ is a subring of $\Aox$. The variables are ordered in the following way: $x_0< x_1 <\dots <x_n$. We systematically consider this setting from Section \ref{sezione:qs}  on (see for instance Definition \ref{def:qstable}).

Every $K$-algebra $A$ that we take is Noetherian with unit $1\neq 0$ and every morphism between $K$-algebras preserves the unit. 

A term is a power product $x^\alpha = x_0^{\alpha_0}\cdot\ldots\cdot x_n^{\alpha_n}$.  Let $\mathbb T_{\mathbf x}$ and $\mathbb T_{x_0,\mathbf x}$ be the set of terms in the variables $\mathbf x$ and $\{x_0\}\cup \mathbf x$, respectively. The degree of a term is $\deg(x^\alpha)=\sum \alpha_i=\vert \alpha\vert$. For every term $x^\alpha\not= 1$ we let $\max(x^\alpha):=\max\{x_i\ \vert\ \alpha_i\not=0\}$ and $\min(x^\alpha):=\min\{x_i \ \vert\ \alpha_i\not=0\}$.

For any non-zero polynomial $F \in \Aox$, the \textit{support} of $F$ is the set $\supp(F)$ of terms $x^\alpha$ in $\mathbb T_{x_0,\mathbf x}$ that appear in $F$ with a non-zero coefficient. The degree of $F$ is $\deg(F):=\max\{\deg(x^\alpha)\vert x^\alpha\in \supp(F)\}$.

For any set $\Gamma$ of polynomials, we will denote by $\Gamma_t$ the set of homogeneous polynomials of $\Gamma$ of degree $t$, by $\Gamma_{\leq t}$ the set of polynomials of $\Gamma$ of degree $\leq t$.  Furthermore, we denote by $\langle \Gamma\rangle$ the $A$-module generated by $\Gamma$. 
When $\Gamma$ is a homogeneous ideal, we denote by $\Gamma_{\geq t}$ the ideal generated by the homogeneous polynomials of $\Gamma$ of degree $\geq t$. 

We refer to \cite{Ma86} and \cite[Section 5]{KR2} for basic results about Hilbert functions. 
For a homogeneous ideal $I$ of $\Aox$, we denote by $H_{\Aox/I}$ the {\it Hilbert function} of $\Aox/I$ and by $P_{\Aox/I}(t)$ its {\it Hilbert polynomial}. For an ideal $\id i$ of $\Ax$, we denote by  $^aH_{\Ax/\id i}$ the {\it affine Hilbert function} of $\Ax/\id i$ and by $^aP_{\Ax/\id i}(z)$ its {\it affine Hilbert polynomial}.

A \emph{monomial} ideal is an ideal generated by terms. For a monomial ideal $J\subset \Aox$ (resp.~$\subset \Ax$) we denote   by $B_J$ the minimal set of terms generating $J$ and  by $\cN(J)$ its \emph{sous-escalier}, i.e.~the set of terms in $\Aox\setminus J$ (resp.~$\Ax\setminus J$).

The following definitions apply in every polynomial ring.

\begin{Definition}
Given a monomial ideal $J$ and an ideal $I$, a {\em $J$-reduced form modulo $I$} of a polynomial $F$ is a polynomial $F_0$ such that $F-F_0$ belongs to $I$ and $\supp(F_0)$ is contained in $\mathcal N(J)$.
If $F_0$ is the unique possible $J$-reduced form modulo $I$ of $F$, then it is called the {\em $J$-normal form modulo $I$} of $F$ and is denoted by $\Nf(F)$. (When there is no ambiguity, we omit the expression \lq\lq modulo $I$\rq\rq.)
\end{Definition}

\begin{Definition} \cite{RStu}
A {\em marked polynomial} is a polynomial $F$ together with a specified term of $\supp(F)$ that will be called {\em head term of $F$} and denoted by $\Ht(F)$. 
\end{Definition} 

Given $F\in \Aox$, we denote by $F^{a}\in \Ax$ the polynomial obtained by setting $x_0= 1$ in $F$. Conversely, if $f$ is a polynomial of $\Ax$, then we denote by $f^{h}$ the polynomial $x_0^{\deg(f)} f(x_1/x_0,\dots, x_n/x_0) \in \Aox$, which is called the \emph{homogenization} of $f$. 

Recall that the homogenization of a given ideal $\id i\subseteq \Ax$ is the ideal $\id i^h:=\{f^h\vert f \in \mathfrak i\}\subseteq \Aox$. 
In general, if we homogenize a set of generators of $\id i\subseteq \Ax$, we do not obtain a set of generators of $\id i^h$. However, the situation is far simpler for monomial ideals, because the homogenization of a monomial ideal $\id j$ of $R$ is $\id j^h=(B_{\id j})\cdot \Aox$. Conversely, given a homogeneous ideal $I$ in $\Aox$ generated by a finite set of homogeneous polynomials $\{F_1,\ldots,F_s\}$, we define  the ideal $I^a$ in $\Ax$ as the ideal generated by the set $\{F^a_1,\ldots,F^a_s\}$. 

The \emph{saturation} $I^\sat$ of a  homogeneous  ideal $I$ in a polynomial ring is the saturation of $I$ with respect to the irrelevant maximal ideal.  
The ideal $I$ is \emph{saturated} if $I=I^{\sat}$, and is $m$-saturated if $I_t=({I^\sat})_t$ for every $t\geq m$. The \emph{satiety} $\sat(I)$ of $I$ is the smallest $m$ for which $I$ is $m$-saturated.
A homogeneous polynomial $F$ in $\Aox$ (resp. $\Ax$) is \emph{generic for $I$} if $F$ is a non-zero divisor in $\Aox/I^\sat$ (resp.~$\Ax/I^{\sat}$). If $I$ is an Artinian ideal, then (we say that) every homogeneous polynomial $F$ is generic for $I$ (see \cite[Definition (1.5)]{BS})

The \emph{regularity} $\reg(I)$ of a homogeneous ideal $I\subseteq \Aox$ (resp.~$\subseteq \Ax$) is the maximal degree of the generators of its syzygies modules. The ideal $I$ is $m$-regular if $m\geq \reg(I)$. 


\section{Quasi-stable ideals and Pommaret bases}
\label{sezione:qs}

In the present section, we consider monomial ideals either in $\Ax$ or in $\Aox$, unless a different assumption is stated explicitly.

\begin{Definition}\label{def:qstable}
A monomial ideal $J$ is \emph{quasi-stable} if there is $s\geq 0$ such that $\displaystyle\frac{x_j^s x^\alpha}{\min(x^\alpha)}$ belongs to $J$, for every $x^\alpha \in J$ and for every $x_j>\min(x^\alpha)$.
\end{Definition}

Now, we recall some of several properties characterizing the quasi-stable ideals. 

\begin{theorem}\label{thm:BorelEquiv}
Let $\ell$ be in $\{0,\dots,n-1\}$, $J$ be a monomial ideal in a polynomial ring $R$ over $K$ with variables  $\{x_\ell,\dots, x_n\}$,   and $d$ be the Krull dimension of $R/J$. Recalling that we ordered the variables so that $x_i<x_{i+1}$ for every $i\in\{\ell,\dots,n-1\}$, the following conditions are equivalent:
\begin{enumerate}[(i)]
\item \label{BorelEquiv_i} $J$ is quasi-stable;
\item \label{BorelEquiv_ii} for each term $x^\alpha\in J$, for all non-negative integer $m$ and for all integers $i,j\in\{\ell,\dots,n\}$ with $i<j$ so that $x^\alpha$ is divisible by $x_i^m$, there exists $s\geq 0$ such that $\displaystyle\frac{x_j^s x^\alpha}{x_i^m}\in J$;
\item \label{BorelEquiv_iii} for each term $x^\alpha\in J$ and for all integers $i,j\in\{\ell,\dots,n\}$ with $i<j$ so that $x^\alpha$ is divisible by $x_i$, there exists $s\geq 0$ such that $\displaystyle\frac{x_j^s x^\alpha}{x_i^{\alpha_i}}\in J$;
\item \label{BorelEquiv_v} $(J:x_i^\infty)=(J:(x_i,\dots,x_n)^\infty)$, for every $x_\ell \leq x_i\leq x_n$ or $\ell\leq i \leq n$.
\item  \label{BorelEquiv_vi} the smallest variable $x_\ell$ is not a zero divisor for $R/J^{\sat}$ and, for all $\ell\leq j\leq \ell+d-1$, the variable $x_{j+1}$ is not a zero divisor for $R/(J,x_\ell,\dots,x_j)^{\sat}$.
\end{enumerate}
\end{theorem}

\begin{proof}
An explicit proof of the equivalence between items \eqref{BorelEquiv_i} and  \eqref{BorelEquiv_ii} is in \cite[Theorem 3.4]{B14}.
The equivalences among items \eqref{BorelEquiv_ii}, \eqref{BorelEquiv_iii} and \eqref{BorelEquiv_v} are proved in \cite[Proposition 2.2]{HPV03}. The equivalence between items \eqref{BorelEquiv_v} and \eqref{BorelEquiv_vi} is proved in \cite[Proposition 3.2]{BG06}.
\end{proof}

\begin{Corollary}\label{lem:0qs}
Let $J$ be a monomial ideal of a polynomial ring $R$ over the field $K$ with variables  $\{x_\ell,\dots, x_n\}$, $x_j<x_{j+1}$ and $d$ the Krull dimension of $R/J$. If $B_J$ consists of terms non-divisible by any of the smallest $d$ variables, then it is quasi-stable. In particular, if $R/J$ is  Artinian then it is quasi-stable.
\end{Corollary}

\begin{proof}
Let $\mathfrak m$ be the maximal irrelevant ideal of the ring $R$. If $d=0$, i.e.~$J$ is Artinian, then there exists a positive integer $s$ such that $J_m=\mathfrak m_m$, for every $m\geq s$. Then $J$ is quasi-stable, by Theorem \ref{thm:BorelEquiv} \eqref{BorelEquiv_ii}. If $d>0$, then the smallest $d$ variables form a regular sequence of linear forms for $R/J$, hence $R/J$ is a Cohen-Macaulay ring and we can apply Theorem \ref{thm:BorelEquiv} \eqref{BorelEquiv_vi}.
\end{proof}

\begin{Corollary}\label{negativo} Let $\id j\subseteq \Ax$ and $J\subseteq \Aox$ be quasi-stable ideals.
\begin{enumerate}[(i)]
\item If $x^\gamma$ is a term in $\mathbb T_{\mathbf x}$, then $x^\gamma$ belongs to $\cN(\id j)\subseteq \mathbb T_{\mathbf x}$ if and only if $x_0^{r}x^\gamma$ belongs to $\cN(\id j^{h})\subseteq \mathbb T_{x_0,\mathbf x}$ for all $r\geq 0$. Moreover, if $f$ is a polynomial in $\Ax$ with $\deg(f)=t$, then
\[
f\in \langle\cN(\mathfrak j)_{\leq t}\rangle  \Leftrightarrow \forall \ r\geq 0,\  x_0^r f^h\in \langle\cN(\id j^h)_{t+r}\rangle.
\]
\item If $J\subseteq \Aox$ is saturated, then a term $x^\gamma$ belongs to $\cN(J) \subseteq \mathbb T_{x_0,\mathbf x}$ if and only if $(x^{\gamma})^a$ belongs to $\cN(J^a)\subseteq \mathbb T_{\mathbf x}$. Moreover, if $F$ is a homogeneous polynomial in $\Aox$, then
\[
F\in\langle \cN(J)_{t}\rangle\Leftrightarrow F^a\in \langle \cN(J^a)_{\leq t}\rangle.
\]
\end{enumerate}
\end{Corollary}

\begin{proof}
It is enough to recall that $B_{\id j}=B_{\id j^h}$ and $B_J=B_{J^a}$, because $J$ is saturated, hence $J=(J:x_0^{\infty})$ by Theorem \ref{thm:BorelEquiv}.
\end{proof}

Quasi-stable ideals can be characterized also by Pommaret bases.

\begin{Definition}\cite{Seiler2009I}
The \emph{Pommaret cone} of a term $x^\alpha$ is the set of terms $\mathcal C_{\mathcal P}(x^\alpha)=\lbrace x^\delta x^\alpha \vert \max(x^\delta)\leq \min(x^\alpha)\rbrace$. Given a finite set $M$ of terms, its \emph{Pommaret span} is $\cup_{x^\alpha\in M} \mathcal C_{\mathcal P}(x^\alpha)$.

The finite set of terms $M$ is a \emph{weak Pommaret basis} of $(M)$ if the Pommaret span of $M$ is the set of terms in the ideal $(M)$  and is a $\emph{Pommaret basis}$ if the Pommaret cones of the terms in $M$ are pairwise disjoint. 
\end{Definition}

\begin{theorem}\label{qsPB}\cite[Definition 4.3 and Proposition 4.4]{Seiler2009II} \ 
A monomial ideal $J$ is quasi-stable if and only if it has a Pommaret basis.
\end{theorem}

It is noteworthy that the Pommaret basis of a quasi-stable ideal is unique (e.g.~\cite[Proposition 2.11 and Algorithm 2]{Seiler2009I}). Hence, for every $x^\gamma$ belonging to a quasi-stable ideal $J$, there is a unique $x^\alpha \in \mathcal P(J)$ such that $x^\gamma \in \mathcal C_{\mathcal P}(x^\alpha)$. In this case, observe that $\max(x^\gamma/x^\alpha)\leq \min(x^\alpha)$. 

The following statement collects some further well-known results.

\begin{Lemma}\label{lemma:importante}
Let $J$ be a quasi-stable ideal  and $\mathcal P(J)$ be its Pommaret basis.
\begin{enumerate}[(i)]
\item\label{importante_00} the regularity of $J$ is the maximum among the degrees of terms in $\mathcal P(J)$.
\item \label{importante_0} the satiety of $J$ is the maximum among the degrees of terms in $\mathcal P(J)$ which are divisible by the smallest variable in the polynomial ring.
\item\label{importante_i} $x^\alpha \in J\setminus \mathcal P(J) \Rightarrow x^\alpha / \min(x^\alpha) \in J$.
\item\label{importante_ii} $x^\beta \in \mathcal N(J)$ and $x_ix^\beta \in J$ $\Rightarrow$ either $x_ix^\beta \in \mathcal P(J)$ or $x_i>\min(x^\beta)$.
\item \label{importante_iii} $x^\eta \in \mathcal N(J)$ and $x^\eta\cdot x^\gamma \in J$ with $x^\eta x^\gamma\in \mathcal C_{\mathcal P}(x^\alpha)$ for $x^\alpha \in \mathcal P(J)$ $\Rightarrow$  $(x^\eta x^\gamma)/x^\alpha <_{lex} x^\gamma$.
\item \label{importante_iv} If $J$ is saturated, then the smallest variable in the polynomial ring does not divide any term in $\mathcal P(J)$.
\end{enumerate}
\end{Lemma}

\begin{proof}  Items \eqref{importante_00} and  \eqref{importante_0} are proved in \cite[Theorem 9.2, Corollary 10.2]{Seiler2009II}. 
Items \eqref{importante_i} and \eqref{importante_ii} follow by the definition of Pommaret basis. For \eqref{importante_iii}, we can use exactly the same arguments that have been applied in the proof of \cite[Lemma 2.5]{BCLR}. Item \eqref{importante_iv} is proved in \cite[Lemma 4.11]{Seiler2009II}. 
\end{proof}

\begin{Remark}\label{rem:BPaff-hom}
Let $\id j\subseteq \Ax$ be a quasi-stable ideal with Pommaret basis $\mathcal P(\id j)$. Then, the homogenization $\id j^h=(B_{\id j})\cdot \Aox$ is a saturated  quasi-stable ideal and its Pommaret basis is $\mathcal P(\id j^h)=\mathcal P(\id j)$.
For a saturated quasi-stable ideal $J\subseteq \Aox$ we have $B_J\subset \Ax$, because $J=J^{\sat}=(J:x_0^{\infty})$, by Theorem \ref{thm:BorelEquiv} \eqref{BorelEquiv_v}. Moreover, the ideal $J^a=(B_J)\cdot \Ax$ is quasi-stable and its Pommaret basis is exactly $\mathcal P(J)$, by Lemma \ref{lemma:importante}\eqref{importante_iv}.
\end{Remark}

If $J$ is quasi-stable and $s\geq 0$, then $J_{\geq s}$ is quasi-stable too \cite[Lemma 4.6]{Seiler2009II}. Here we describe the Pommaret basis of $J_{\geq s}$ starting from that of $J$.

\begin{Lemma}\label{lem:star}\cite[Proposition 4.7]{CMR13} The Pommaret basis of a quasi-stable ideal $J$ is 
\begin{equation*}
\mathcal P(J)=\Bigl\{x^\alpha \in J : \dfrac{x^\alpha}{\min(x^\alpha)} \in\mathcal N(J)\Bigr\}.
\end{equation*}
\end{Lemma}

\begin{Proposition}\label{prop:PBtronc}
Let $J$ be a quasi-stable ideal.
\begin{enumerate}[(i)]
\item \label{prop:PBtronc_i}
$\mathcal P(J_{\geq s})=\mathcal P(J)_{\geq s+1}\cup \left(\bigcup_{x^\alpha\in \mathcal P(J)_{\leq s}}\mathcal C_{\mathcal P}(x^\alpha)_s\right)$, for every $s\geq 0$.
\item \label{prop:PBtronc_ii} If $J$ is $m$-saturated, then $\mathcal P(J)_\ell=\mathcal P(J^\sat)_\ell$, for every $\ell\geq m+1$.
\end{enumerate}
\end{Proposition}

\begin{proof}
(i) We consider the description of $\mathcal P(J_{\geq s})$ given in  Lemma \ref{lem:star}. First, we observe that the set of terms $x^\beta \in \mathcal P(J_{\geq s})$ with $\vert \beta \vert>s$ is exactly the set $\mathcal P(J)_{\geq s+1}$. If $x^\beta$ belongs to $\mathcal P(J_{\geq s})$ with $\vert \beta \vert=s$, then there are two possibilities: either $x^\beta$ belongs to $\mathcal P(J)_s$ or $x^\beta\in \mathcal C_{\mathcal P}(x^\alpha)$, with $x^\alpha \in \mathcal P(J)_{<s}$ and $\deg(x^\beta/x^\alpha)\geq 1$. In the latter case, $x^\beta$  belongs to $\mathcal C_{\mathcal P}(x^\alpha)_s$. Conversely, for every $x^\alpha \in \mathcal P(J)_{\leq s}$  and for every $x^\beta\in \mathcal C_{\mathcal P}(x^\alpha)_s$, $x^\beta/\min(x^\beta)$  does not belong to $J_{\geq s}$, hence $x^\beta$ belongs to $\mathcal P(J_{\geq s})$ by Lemma \ref{lemma:importante}\eqref{importante_ii}.

(ii) Being $J$ $m$-saturated, we have $J_{\geq t}=({J^\sat})_{\geq t}$ for every $t\geq m$, hence $\mathcal N(J)_t=\mathcal N(J^{\sat})_t$ for every $t\geq m$. Then, it is enough to use  Lemma \ref{lem:star}.
\end{proof}


\section{Saturation in marked families}

In the present section we restrict our attention on homogeneous polynomials and homogeneous ideals in $\Aox$.

\begin{Definition}\label{def:cose marcate} \cite[Definition 5.1]{CMR13}
Let $J\subset \Aox$ be a quasi-stable ideal.

A {\em $\mathcal P(J)$-marked set} (or marked set over $\mathcal P(J)$) $G$ is a set of homogeneous monic marked polynomials $F_\alpha$ in $\Aox$ such that  the head terms $Ht(F_\alpha)=x^\alpha$ are pairwise different and form the Pommaret basis $\mathcal P(J)$ of $J$, and $\supp(F_\alpha-x^\alpha)\subset \mathcal N(J)$.

A $\mathcal P(J)$\emph{-marked basis} (or marked basis over $\mathcal P(J)$) $G$ is a $\mathcal P(J)$-marked set such that $\mathcal N(J)$ is a basis of $\Aox/(G)$ as an $A$-module, i.e.~$\Aox=(G)\oplus \langle \cN (J) \rangle$ as an $A$-module.

The {\em $\mathcal P(J)$-marked family} (or marked family over $\mathcal P(J)$) $\Mf(\mathcal P(J))$ is the set of all homogeneous ideals $I$ that are generated by a $\mathcal P(J)$-marked basis.
\end{Definition} 

A consequence of Definition \ref{def:cose marcate} is that a $\mathcal P(J)$-marked set $G$ is a $\mathcal P(J)$-marked basis if and only if every polynomial of $\Aox$ has a $J$-normal form modulo $(G)$. 

\begin{Definition}\cite[Definitions 5.3 and 5.6]{CMR13}\label{relazione omogenea}
Let $J\subseteq \Aox$ be a quasi-stable ideal and $G=\{f_\alpha\}_{x^\alpha\in\mathcal P(J)}$ a $\mathcal P(J)$-marked set. For each degree $\ell$ such that $J_\ell\not=\{0\}$, we denote by $G^{(\ell)}$ the set of homogeneous polynomials
$$G^{(\ell)}=\{x^\eta F_\alpha  \ \vert \ x^\eta x^\alpha \in J_\ell \text{ and }x^\eta x^\alpha  \in \mathcal C_{\mathcal P}(x^\alpha) \}\subseteq (G)_\ell$$
that are marked on the terms of $J_\ell$ in the natural way $Ht(x^\eta F_\alpha )=x^\eta x^\alpha$. 

For every $\ell\geq 0$, we will denote by $\xrightarrow{\ G^{(\ell)} \ }$ the reflexive and transitive closure of the following reduction relation on $\Aox_\ell$: $F$ is in relation with $F'$ if $F'=F-\lambda x^\eta F_\alpha$, where $x^\eta F_\alpha$ belongs to $G^{(\ell)}$ and $\lambda\neq 0_A$ is the coefficient of the term $x^{\eta+\alpha}$ in $F$.
We will write $F \xrightarrow{\ G^{(\ell)}\ }_\ast F_0$ if   $F\in \Aox_\ell$, $F \xrightarrow{\ G^{(\ell)}\ } F_0$ and $\supp(F_0) \subseteq \mathcal N(J)_{ \ell}$.
\end{Definition}

\begin{theorem}\label{th:varie CMR}\cite[Lemma 5.8, Theorems 5.9 and 5.13 and Corollary 5.11]{CMR13} 
Let $J\subseteq \Aox$ be a quasi-stable ideal and $G$ a $\mathcal P(J)$-marked set. Then, the reduction relation $\xrightarrow{\ G^{(\ell)}\ }$ is Noetherian and confluent and the followings are equivalent:
\begin{enumerate}[(i)]
\item $G$ is a $\mathcal P(J)$-marked basis;
\item \label{varie CMR_ii}$\langle G^{(\ell)}\rangle =(G)_\ell$, for every $\ell$ such that $J_\ell\not=\{0\}$;
\item \label{varie CMR_iii}$F \xrightarrow{\ G^{(\ell)}\ }_\ast 0$, for every $\ell\geq 0$ and  for every $F\in (G)_{\ell}$;
\item \label{varie CMR_iv} $x_iF_\alpha \xrightarrow{\ G^{(\ell+1)}\ }_\ast 0$ for every $\ell\geq 0$, for every $F_\alpha \in G_{ \ell}$ and for every $x_i > \min(x^\alpha)$.
\end{enumerate}
\end{theorem}

We now show that, if $J\subseteq \Aox$ is a quasi-stable ideal, then the satiety of $J$ is a bound from above for  the satiety of every ideal $I\in\Mf({\mathcal P}(J))$ and the scheme $\Proj(\Aox/I)$ has no component at infinity. The following result is analogous to \cite[Lemma 4.2]{BCLR}. 

\begin{Lemma}\label{lemma1}
Let $J\subseteq \Aox$ be an $m$-saturated quasi-stable ideal,
$F\in \Aox$ a polynomial of degree $\ell\geq m$ and $G$ a $\mathcal P(J)$-marked set. Then
$$F\in \langle G^{(\ell)}\rangle \Leftrightarrow x_0 F\in \langle G^{(\ell+1)}\rangle.$$
\end{Lemma}

\begin{proof}
If $F\in \langle G^{(\ell)}\rangle$ then $x_0\cdot F\in \langle G^{(\ell+1)}\rangle$ by definition.
Conversely, if $x_0 F\in \langle G^{(\ell+1)}\rangle\subset (G)$ then $x_0\cdot F \xrightarrow{\ G^{(\ell+1)}\ }_\ast 0$ by Theorem \ref{th:varie CMR}. Moreover, observe that for every term $x_0x^\gamma\in J\cap\supp(x_0 F)$, we have $x_0x^\gamma \notin \mathcal P(J)$. Indeed, since $\deg(x_0x^\gamma)=\ell+1 \geq m+1$, if $x_0x^\gamma\in \mathcal P(J)$, then $x_0x^\gamma\in \mathcal P(J^\sat)$, by Proposition \ref{prop:PBtronc}\eqref{prop:PBtronc_ii}. But every term  in $\mathcal P(J^\sat)$ is not divisible by $x_0$, by Lemma \ref{lemma:importante} \eqref{importante_iv}. 
Hence, in order to reduce every term $x_0x^\gamma$ by $\xrightarrow{\ G^{(\ell+1)}\ }$ we use always polynomials divisible by $x_0$, i.e.~polynomials of type $x_0x^\eta F_\alpha $, with $x^\eta F_\alpha$ belonging to $G^{(\ell)}$. 
\end{proof}

\begin{theorem}\label{bastaaffine}
Let $J\subseteq \Aox$ be a quasi-stable ideal and $I$ an ideal in $\Mf({\mathcal P}(J))$. Then:
\begin{enumerate}[(i)]
\item $x_0$ is generic for $I$;
\item if $J$ is saturated, then $I$ is saturated;
\item \label{bastaffineiii} $\sat(I)\leq \sat(J)$.
\end{enumerate}
Moreover, $(I:x_0^\infty)=I^\sat$.
\end{theorem}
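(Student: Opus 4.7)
The plan is to use Lemma \ref{lemma1} as the main lever: for every $\ell \geq \sat(J)$, multiplication by $x_0$ is injective on the graded piece $(S/I)_\ell$ (the forward implication is trivial; the reverse is exactly Lemma \ref{lemma1} together with the fact that $I_\ell=\langle V_\ell\rangle$ for every $\ell$, since $I\in\Mf(J)$). I would first establish (iii), then deduce the consequence $(I:x_0^\infty)=I^\sat$, and finally extract (i) and (ii) from what has been proved.

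For (iii), I would take a homogeneous $g \in I^\sat_t$ with $t \geq \sat(J)$. By definition of $I^\sat$, there exists $j$ such that $g \cdot (x_0,\ldots,x_n)^j \subseteq I$; in particular $x_0^j g \in I$. Iterated application of Lemma \ref{lemma1} strips the powers of $x_0$ one at a time, the hypothesis $\ell \geq \sat(J)$ remaining valid at every step because the degree is at least $t \geq \sat(J)$ throughout. This yields $g \in I$, hence $I_t = I^\sat_t$ for every $t \geq \sat(J)$, i.e.\ $\sat(I) \leq \sat(J)$. Assertion (ii) is then an immediate byproduct: if $J$ is saturated, Lemma \ref{regsatB} tells us that no monomial in $B_J$ involves $x_0$, so the set defining $\sat(J)$ is empty and the argument above applies for every $t \geq 0$, giving $I = I^\sat$.

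For the consequence $(I:x_0^\infty)=I^\sat$, the inclusion $I^\sat \subseteq (I:x_0^\infty)$ is automatic since $x_0^j \in (x_0,\ldots,x_n)^j$. For the reverse, given $g \in (I:x_0^\infty)$ with $x_0^j g \in I$, I would multiply by an arbitrary monomial $x^\delta$ of degree $k \geq \sat(J) - \deg g$: the product $(x^\delta g)\,x_0^j = x^\delta(x_0^j g)$ still lies in $I$, and iterated application of Lemma \ref{lemma1} to $x^\delta g$ (now of degree $\geq \sat(J)$) produces $x^\delta g \in I$. Letting $x^\delta$ range over all monomials of degree $k$ gives $g\cdot (x_0,\ldots,x_n)^k \subseteq I$, so $g \in I^\sat$.

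Finally, (i) follows formally from the consequence: if $x_0 g \in I^\sat$, then $x_0 g \in (I:x_0^\infty)$, so $x_0^{j+1} g \in I$ for some $j$, and therefore $g \in (I:x_0^\infty) = I^\sat$, which is exactly the statement that $x_0$ is a non-zero-divisor on $S/I^\sat$. The only genuinely delicate point in this plan is the repeated invocation of Lemma \ref{lemma1}: one must keep the degrees above $\sat(J)$ at every iteration, but this is easily arranged in each instance by being sufficiently generous with the power of $x_0$ or the degree of the monomial multiplier.
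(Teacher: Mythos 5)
Your proof is correct and rests on the same key lever as the paper's: Lemma \ref{lemma1}, which (together with $I=\langle V\rangle$) makes multiplication by $x_0$ injective on $(S/I)_\ell$ for $\ell\geq\sat(J)$. The paper, however, compresses the argument: it notes that this injectivity gives $(I:x_0)_t=I_t$ for all $t\geq\sat(J)$, and then invokes Bayer--Stillman \cite[Lemma (1.6)]{BS}, which says precisely that this condition is equivalent to $I$ being $\sat(J)$-saturated with $x_0$ generic for $I$. You instead unpack that citation: deducing (iii) by stripping powers of $x_0$ one at a time, establishing $(I:x_0^\infty)=I^\sat$ by first multiplying into sufficiently high degree, and finally reading (i) and (ii) off formally. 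The net mathematical content is the same, but your version is self-contained where the paper delegates to an external lemma, at the cost of a slightly longer and less modular argument.
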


\begin{proof}
Let $G$ be the $\mathcal P(J)$-marked basis of $I$. Then, by Theorem \ref{th:varie CMR}\eqref{varie CMR_ii} and, assuming that $J$ is $m$-saturated, by Lemma \ref{lemma1}, we obtain  
$$F\in (I:x_0)_t \Rightarrow Fx_0 \in I_{t+1}=\langle G^{(t+1)}\rangle \Rightarrow F\in \langle G^{(t)}\rangle=I_t$$
for every $t\geq m$, i.e.~$(I:x_0)_t=I_t$ for every $t\geq m$.
By \cite[Lemma (1.6)]{BS}, this is equivalent to the fact that $I$ is $m$-saturated too and $x_0$ is generic for $I$.
\end{proof}

\begin{Remark}
If $K$ is not an infinite field, there may not be a generic linear form for an ideal $I$. However, even if $K$ is finite, $x_0$ is a generic linear form for a  quasi-stable ideal $J$, and by Theorem \ref{bastaaffine}, it is generic for $I\in \Mf({\mathcal P}(J))$ too.
\end{Remark}

\begin{Corollary}\label{sattronc}
Let $J\subseteq \Aox$ be a  saturated quasi-stable ideal and $m$ be a non-negative integer. If $I$ is a non-saturated ideal of $\Mf({\mathcal P}(J_{\geq m}))$, then $\sat(I)=m$ and $I=(I^{\sat})_{\geq m}$.
\end{Corollary}

\begin{proof}
Since $I$ is not saturated, then by Theorem \ref{bastaaffine} the ideal $J_{\geq m}$ is not saturated and $\sat(J)=m$.  Since $m$ is also the initial degree of the ideal $I$, we also have that $\sat(I)\geq m$. By Theorem \ref{bastaaffine}(\ref{bastaffineiii}), we have $\sat(I)=m$ and, in particular, $I=(I^{\sat})_{\geq m}$.
\end{proof}

If $J$ is a quasi-stable ideal, in general the family $\Mf({\mathcal P}(J))$ contains saturated ideals, even if $J$ is not saturated, as the following example shows.

\begin{Example}\label{ex:saturati}
We consider $S=K[x_0,x_1,x_2]$ over a field $K$ of characteristic zero and the saturated quasi-stable ideal $J=(x_2^2,x_1^3x_2,x_1^4)$. In this case $\mathcal P(J)=B_J$  and $S/J$ has Hilbert polynomial $7$. Geometrically, $\Proj(S/J)$ is a non-reduced scheme of length 7. The family $\Mf({\mathcal P}(J_{\geq 3}))$ is a dense open subset of $\hilb_7^2$ (see \cite{BLR}), which has only one component. Since 7 general points in $\PP^2$ are not on a conic, there is an open subset of $\Mf({\mathcal P}(J_{\geq 3}))$ made up of saturated ideals. 
More explicitly, we consider the $\mathcal P(J_{\geq 3})$-marked sets:
\[G_{u,v}=\mathcal P(J_{\geq 3})\setminus\{x_2^2x_0,x_1^4\}\cup\{x_{{2}}^{2}x_{{0}}-ux_{{1}}^{2}x_{{2}},x_{{1}}^{4}+vux_{{1}}^{2}x_{{0}}^{2}-vx_{{2}}x_{{0}}^{3}\}, \quad u,v \in K\]
and define $I_{u,v}:=(G_{u,v})K[x_0,x_1,x_2]$. Since $\dim (I_{u,v})_t=\dim J_t$ for every $t\geq 3$, $I_{u,v}$ belongs to $\Mf({\mathcal P}(J_{\geq 3}))$ and is saturated, for every $u,v \in K$.
\end{Example}

\begin{Corollary}\label{cor:CM}
Let $J$ be a monomial ideal of $K[x_0,\mathbf x]$ and $d$ the Krull dimension of $K[x_0,\mathbf x]/J$. If $B_J$ consists of terms in $K[x_d,\dots,x_n]$, then $J$ is quasi-stable and $K[x_0,\mathbf x]/I$ is Cohen-Macaulay for every ideal $I$ of $\Mf({\mathcal P}(J))$.
\end{Corollary}

\begin{proof}
It is enough to apply Corollary \ref{lem:0qs} and Theorem \ref{bastaaffine}.
\end{proof}

\begin{Remark}\label{rem:CM}
By Corollary \ref{cor:CM} we recover the well-known fact that the points of a Hilbert scheme corresponding to arithmetically Cohen-Macaulay schemes form an open subset, at least in characteristic $0$. Indeed, up to a suitable coordinate change, every ideal $I$ with $R/I$ Cohen-Macaulay belongs to a family $\Mf({\mathcal P}(J))$, for an ideal $J$ satisfying the same hypotheses of Corollary \ref{cor:CM} with $d\geq 1$. For example, it is sufficient to consider the generic initial ideal of $I$ with respect to the graded reverse lexicographic term order. If $d=1$, then all the points of the Hilbert scheme correspond to arithmetically Cohen-Macaulay schemes. If $d>1$, then $\Mf({\mathcal P}(J))$ can be embedded in a Hilbert scheme as an open subscheme by \cite[Subsection 1.4 and Theorem 3.1]{BLR} and we conclude. We can also observe that every quasi-stable saturated ideal $J\subset \Aox$ such that $\Aox/J$ is Cohen-Macaulay is the so-called double-generic initial ideal of the irreducible components containing $J$ in a Hilbert scheme (see \cite[Proposition 4(b) and Definition 5]{BCR}).
\end{Remark}
%
%
%


\section{The affine setting and homogenization}
\label{sec:affine}

Let $\mathfrak j\subseteq \Ax$ be a quasi-stable ideal and $m$ a non-negative integer. Recall that $J:=B_{\mathfrak j}\cdot \Aox$ is a saturated quasi-stable ideal in $\Aox$ and $J_{\geq m}={(\id j)^h}_{\geq m}$.

\begin{Definition}\label{marked affini}
A {\em $[\mathcal P(\mathfrak j),m]$-marked set} $\mathfrak G$ is a set of monic marked  polynomials $f_\alpha$ of $\Ax$ such that the head terms $\Ht(f_\alpha)=x^\alpha$ are pairwise different and form the Pommaret basis $\mathcal P(\id j)$ of $\id j$, and $\supp(f_\alpha-x^\alpha)\subseteq \mathcal N(\id j)_{\leq t}$ with $t=\max\{m,\vert \alpha \vert\}$. 

A $[\mathcal P(\mathfrak j),m]$-marked set $\mathfrak G=\{f_\alpha\}_{x^\alpha \in \mathcal P(\id j)}$ is a {\em $[\mathcal P(\mathfrak j),m]$-marked basis} if there is a $\mathcal P(J_{\geq m})$-marked basis $G$ with suitable integers $k_\alpha$ such that $x_0^{k_\alpha}f_\alpha^h$ belongs to $G$, for every $x^\alpha \in \mathcal P(\id j)$.

The {\em $[\mathcal P(\id j),m]$-marked family} $\Mf({\mathcal P}(\id j),m)$ is the set of all the ideals $I\subseteq \Ax$ that are generated by a $[\mathcal P(\id j),m]$-marked basis.
\end{Definition}

We  consider the following relation, whose definition involves a $[\id j,m]$-marked set.

\begin{Definition}\label{relazione affine}
Let $\id G=\{f_\alpha\}_{x^\alpha\in\mathcal P(\id j)}$ be a $[\mathcal P(\id j),m]$-marked set. We denote by $\id G_{\mathcal P}$ the set of polynomials
$$\id G_{\mathcal P}=\{x^\delta f_\alpha  \ \vert f_\alpha \in \id G \text{ and } \ x^\delta x^\alpha \in \mathcal C_{\mathcal P}(x^\alpha) \}\subseteq (\id G)$$
that are marked on the terms of $\id j$ in the natural way $Ht(x^\delta f_\alpha )=x^\delta x^\alpha$. 

We will denote by $\xrightarrow{\ \id G_{\mathcal P}\ }$ the reflexive and transitive closure of the  following reduction relation on $\Ax$: $f$ is in relation with $f'$ if $f'=f-\lambda x^\delta f_\alpha $, where $x^\delta f_\alpha \in \id G_{\mathcal P}$ and $\lambda\neq 0_A$ is the coefficient of the term $x^{\alpha+\delta}$ in $f$.
We will write $f \xrightarrow{\ \id G_{\mathcal P}\ }_\ast h$ if $f \xrightarrow{\ \id G_{\mathcal P}\ } h$ and $h \in \langle \mathcal N(\id j)\rangle$.
\end{Definition}

\begin{Proposition} \label{th:reductionAffineSet}
Let $\id G\subset \Ax$ be a $[\mathcal P(\id j),m]$-marked set. 
\begin{enumerate}[(i)]
\item \label{rAS_i} The reduction relation $\xrightarrow{\ \id G_{\mathcal P}\ }$ is Noetherian.
\item\label{rAS_ii} For every polynomial $f\in \Ax$ there is $\bar f\in \langle \mathcal N(\id j)\rangle$ such that $f \xrightarrow{\ \id G_{\mathcal P}\ }_\ast \bar f$. The polynomial $\bar f$ is a $\id j$-reduced form of $f$ modulo the ideal $(\id G)$.
\item \label{rAS_iii} $\Ax=\langle \id G_{\mathcal P}\rangle \oplus \langle \mathcal N(\id j)\rangle$.
\item \label{rAS_iv} The reduction relation $\xrightarrow{\ \id G_{\mathcal P}\ }$ is confluent.
\end{enumerate}
\end{Proposition}
\begin{proof} 
(i) It is enough to observe that if $\xrightarrow{\  \id G_{\mathcal P}\ }$ was not Noetherian, by Lemma \ref{lemma:importante}\eqref{importante_iii}  applied to $\id j$ we would find a strictly descending  infinite chain of terms w.r.t.~the lex term order, that is impossible. 

(ii) By the Noetherianity of the reduction relation $\xrightarrow{\ \id G_{\mathcal P}\ }$, there is always a finite number of steps of the reduction $\xrightarrow{\ \id G_{\mathcal P}\ }$ from $f$ to a polynomial $\bar f$  that is reduced w.r.t.~$\xrightarrow{\ \id G_{\mathcal P}\ }$. By Definition \ref{relazione affine}, we must have $\bar f\in \langle \mathcal N(\id j)\rangle$ and $f-\bar f\in \langle \id G_{\mathcal P}\rangle \subseteq (\id G)$. Hence, $\bar f$ is a $\id j$-reduced form of $f$ modulo the ideal $(\id G)$.

(iii) By \eqref{rAS_ii} we obtain $\Ax=\langle \id G_{\mathcal P}\rangle + \langle \mathcal N(\id j)\rangle$. So, it is sufficient to show that $\langle \id G_{\mathcal P}\rangle \cap \langle\mathcal N(\id j)\rangle=\{0\}$.
Let $f$ be a non-zero polynomial belonging to $\langle \id G_{\mathcal P}\rangle \cap \langle \mathcal N(\id j)\rangle$. 
Since $f$ belongs to $\langle \id G_{\mathcal P}\rangle$, we can write $f=\sum \lambda_{\delta\alpha} x^\delta f_\alpha$, for suitable $\lambda_{\delta\alpha}\in A\setminus \{0\}$ and $x^\delta f_\alpha\in \id G_{\mathcal P}$ (we can obtain such a writing by applying $\xrightarrow{\ \id G_{\mathcal P}\ }$), and, in particular, every $x^\delta f_\alpha$ appears only once. Since $f$ belongs to $ \langle \mathcal N(\id j)\rangle$, for every $x^\delta f_\alpha$ appearing in the writing $f=\sum \lambda_{\delta\alpha} x^\delta f_\alpha$, the terms in $\supp(x^\delta f_\alpha)\cap \id j$ cancel with some terms of other polynomials $x^{\delta'} f_{\alpha'}\in \id G_{\mathcal P}$.  Among the terms $x^\delta \Ht(f_\alpha)$ appearing in the writing of $f$, consider the one having maximal $x^\delta$ according to the lex term order. This term $x^\delta x^\alpha$ should cancel with some term in $x^{\delta'}(f_{\alpha'}-x^{\alpha'})$, for some $x^{\delta'}f_{\alpha'}$ appearing in the summation. Suppose that for some $x^\eta \in \supp(f_{\alpha'}-x^{\alpha'})\subseteq \mathcal N(\id j)$ we have that $x^{\delta'}x^\eta= x^\delta x^\alpha\in \mathcal C_{\mathcal P}(x^\alpha)$. Then, by Lemma \ref{lemma:importante} \eqref{importante_iii}, it follows $x^{\delta'}>_{lex}x^\delta$,  contradicting the maximality of $x^\delta$. Hence, the term $ x^\delta x^\alpha$  cannot be cancelled by any other term. We can conclude that $f=0$.

(iv) This is a consequence of item \eqref{rAS_iii}.
\end{proof}

Although the definition of $[\mathcal P(\id j),m]$-marked basis $\id G$ in $A[\mathbf x]$ relies on the existence of a homogeneous marked basis in $\Aox$, the following two theorems give an equivalent condition that only involves the $\id j$-reduced forms modulo $(\id G)$ and their degrees.

\begin{theorem}\label{th:markedBasisA1}
If $\id G$ is a $[\mathcal P(\id j),m]$-marked basis then every polynomial $f\in \Ax$ has $\id j$-normal form $\Nf(f)$ modulo $(\id G)$ with $\deg(\Nf(f))\leq \max\{\deg(f),m\}$. 
\end{theorem}

\begin{proof}
By Definition \ref{marked affini}, there are a $\mathcal P(J_{\geq m})$-marked basis $G$ and suitable integers $k_\alpha$ such that $x_0^{k_\alpha}f_\alpha^h$ belongs to $G$, for every $x^\alpha \in \mathcal P(\id j)$.  Thus, for every element $x^\delta f_\alpha$ in $\id G_{\mathcal P}$ the polynomial $x_0^{k_\alpha} x^\delta f_\alpha^h$ belongs to the ideal generated by $G$ and, hence, every element in $\langle \id G_{\mathcal P} \rangle$ belongs to $(G)$. 

By Proposition \ref{th:reductionAffineSet}\eqref{rAS_ii}, for every $f \in \Ax$, there is $\bar f \in\langle \mathcal N(\id j)\rangle$ such that $f\xrightarrow{\id G_{\mathcal P}}_\ast\bar f$. Then, the polynomial $\bar f$ is a $\id j$-reduced form of $f$ modulo $(\id G)$ and, by construction, $f-\bar f$ belongs to $\langle \id G_{\mathcal P} \rangle$. Thus, by the previous observation there is a suitable integer $t$ such that $x_0^t(f-\bar f)^h$ belongs to $(G)$. In particular, letting 
\[
t_f=\max\{0, \deg(\bar f)-\deg(f)\},\quad t_{\bar f}=\max\{0, \deg(f)-\deg(\bar f)\},
\]
we have $x_0^{t+t_f}f^{h}-x_0^{t+t_{\bar f}}\bar f^{h}\in (G)$, where $x_0^{t+t_f}\bar f^{h}\in \langle \mathcal N(J_{\geq m})\rangle$ by Corollary \ref{negativo}. Hence, $x_0^{t+t_f} \bar f^{h}$ is the $J_{\geq m}$-normal form of $x_0^{t+t_{\bar f}}f^{h}$ modulo $(G)$, by Theorem \ref{th:varie CMR} \eqref{varie CMR_iii} being $G$ a $\mathcal P(J_{\geq m})$-marked basis. We would obtain a contradiction if $f$ had more than one $\id j$-reduced form modulo $(\id G)$ and we can conclude that $\bar f$ is the $\id j$-normal form of $f$ modulo $(\id G)$. 

Now, it remains to show that the degree of the $\id j$-normal form modulo $(\id G)$ of a polynomial $f$ is bounded from above by $\max\{\deg(f),m\}$ and it is sufficient to argue on the terms of $\Ax$. If a term $x^\beta$ belongs to $\cN(\id j)$, the thesis follows immediately because $x^\beta$ coincides with its $\id j$-normal form.

We now prove the statement for the terms in $\id j$, considering first the 
terms in $\mathcal P(\id j)$, then the terms in $\id j_{\leq m}\setminus \mathcal P(\id j)$, and finally the remaining terms in $\id j\setminus\{\mathcal P(\id j)\cup \id j_{\leq m}\}$.

If $x^\beta$ belongs to $\mathcal P(\id j)$, then $x^\beta\xrightarrow{\id G_{\mathcal P}}_\ast x^\beta-f_\beta$ and $\deg(x^\beta-f_\beta)\leq \max\{\vert \beta\vert,m\}$, by Definition \ref{marked affini}.

Consider now $x^\beta \in \id j_{\leq m}\setminus \mathcal P(\id j)$ and let $H_{\overline \beta}$ be the $J_{\geq m}$-normal form modulo $(G)$ of the term $x_0^{m-\vert\beta\vert}x^\beta$ in $\Aox$. By Proposition \ref{th:reductionAffineSet}  we know that every polynomial in $\Ax$ has a $\id j$-reduced form modulo $(\id G)$.
By the uniqueness of the reduced forms previously proved, $(H_{\overline \beta})^a$ is the $\id j$-normal form of $x^\beta$ modulo $(\id G)$, i.e.~$x^\beta\xrightarrow{\id G_{\mathcal P}}_\ast (H_{\overline \beta})^a$ and by construction we have also $\deg((H_{\overline \beta})^a)\leq \deg(H_{\overline \beta})\leq m$.

Finally, let $E$ be the set of terms $x^\beta$ in $\id j\setminus\{\mathcal P(\id j)\cup \id j_{\leq m}\}$ whose $\id j$-normal form modulo $(\id G)$ has degree exceeding $\vert \beta \vert> m$. Suppose $E$ is non-empty and consider $x^\beta \in E$ with minimal degree $t:=\deg(x^\beta)$ and minimal $x_i:=\min(x^\beta)$ among the monomials of $E$ of degree $t$. Recall that $x^\beta$ belongs to $\id j$ and its degree $t$ is $\geq m+1$.
By Lemma \ref{lemma:importante}\eqref{importante_i}, the term $x^\beta$ can be written as $x^{\beta'} x_i$, with $x^{\beta'} \in \id j_{\leq t-1}$ and $x_i=\min (x^\beta)$. Let $h_{\beta'}$ be the $[\id j,m]$-normal form modulo $(\id G)$ of $x^{\beta'}$, hence $x^{\beta'}-h_{\beta'}\in (\id G)$. By the minimality of $t$ in $E$, we have $h_{\beta'}\in \langle \cN(\id j)_{\leq t-1}\rangle$. For every monomial $x_ix^\eta \in \supp(x_i h_{\beta'})\cap \id j$, we can apply Lemma \ref{lemma:importante}\eqref{importante_ii} because $x^\eta \in  \cN(\id j)_{\leq t-1}$, obtaining that either $x_ix^\eta$ belongs to $\mathcal P(\id j)$ or $x_i>\min(x^\eta)$. If $x_ix^\eta \in \mathcal P(\id j)$, then there is $f\in \id G$ such that $\Ht(f)=x_ix^\eta$ and $x_ix^\eta\xrightarrow{\id G_{\mathcal P}}_\ast T(f)$ with $\deg(T(f))\leq t$ by definition of $\id G$. Otherwise,   $x_ix^\eta=x^{\alpha'}x_\ell$, with $x^{\alpha'}\in \id j$ and $\min(x^\gamma)=x_\ell<x_i$. Then, by the minimality of $x_i$, every monomial of $\supp(x_i h_{\beta'})\cap \id j$ has a $\id j$-reduced form modulo $(\id G)$ of degree $\leq t$. This is a contradiction, so $E$ is empty.
\end{proof}

\begin{theorem}\label{th:markedBasisA2}
Let  $\id G$ be a $[\mathcal P(\id j),m]$-marked set. If every polynomial $f\in \Ax$ has $\id j$-normal form $\Nf(f)$ modulo $(\id G)$ with $\deg(\Nf(f))\leq \max\{\deg(f),m\}$, then $\id G$ is a $[\mathcal P(\id j),m]$-marked basis. 
\end{theorem}

\begin{proof}
We construct a set of marked polynomials whose head terms are the terms in $\id j_{\leq m}\cup \mathcal P(\id j)$. 
For every term $x^\alpha$ in $\mathcal P(\id j)$, we simply consider the marked polynomial $f_\alpha$ in $\id G$ and define $F_\alpha:=x_0^{t_\alpha}f_\alpha^h\in \Aox$, where $t_\alpha:=\max\{m-\deg(f_\alpha),0\}$.

By the hypothesis, for every term $x^\beta \in \id j_{\leq m}\setminus\mathcal P(\id j)$, there is a unique polynomial $\Nf(x^\beta)\in \langle \mathcal N(\id j)\rangle$ such that $x^\beta \xrightarrow{\ \id G_{\mathcal P}\ }_\ast\Nf(x^\beta)$ and $\deg(\Nf(x^\beta))\leq \max\{\vert \beta \vert,m\}=m$. Then, the polynomial $f_\beta=x^\beta-\Nf(x^\beta)$ belongs to $\langle\id G_{\mathcal P}\rangle\subseteq (\id G)$ and $\deg(f_\beta)\leq m$. We define $F_\beta:=x_0^{t_\beta}f_\beta^h\in \Aox$, where $t_\beta:=\max\{m-\deg(f_\beta),0\}$. Note that, by construction, $F_\alpha^a$ and $F_\beta^a$ belong to $(\id G)$.

The following set of polynomials 
\begin{equation}\label{eq:homBasis}
G:=\{F_\alpha\}_{x^\alpha\in \mathcal P(\id j)}\cup\{F_\beta\}_{x^\beta\in \id j_{\leq m}\setminus\mathcal P(\id j)} \subseteq \Aox
\end{equation}
is a $\mathcal P(J_{\geq m})$-marked set, by Proposition \ref{prop:PBtronc}\eqref{prop:PBtronc_i}.
By Theorem \ref{th:varie CMR}\eqref{varie CMR_iii}, the marked set $G$ is a $\mathcal P(J_{\geq m})$-marked basis because,  for every $\ell\geq 0$  and every $F\in (G)_{ \ell}$, $F\xrightarrow{\ G^{ (\ell)}\ }_\ast 0$. Indeed, assume that $F\xrightarrow{\ G^{ (\ell)} \ }_\ast H$. Note that, by Corollary \ref{negativo} and by the construction of $G$, the polynomial $H^a$ is the $\id j$-normal form of $F^a$ modulo $(\id G)$. Since also the null polynomial is the $\id j$-normal form of $F^a$ modulo $(\id G)$, we must have $H^a=0$, hence $H=0$.
\end{proof}

\begin{Corollary}\label{cor:markedBasisA}
Let  $\id G$ be a $[\mathcal P(\id j),m]$-marked set. Then, $\id G$ is a $[\mathcal P(\id j),m]$-marked basis if and only if every polynomial $f\in \Ax$ has $\id j$-normal form $\Nf(f)$ modulo $(\id G)$ with $\deg(\Nf(f))\leq \max\{\deg(f),m\}$. 
\end{Corollary}

\begin{Notation}
Let $\id G$ be a $[\mathcal P(\mathfrak j),m]$-marked basis. By Theorem \ref{th:markedBasisA1}, for every $x^\beta\in \id j_{\leq m}\setminus \mathcal P(\id j)$ we can consider the polynomial $f_\beta:=x^\beta-\Nf(x^\beta)$, where $\Nf(x^\beta)$ belongs to $\langle \mathcal N(\id j)_{\leq m}\rangle$. We denote by $\overline{\id G}$ the set $\{ {f_\beta}\}_{x^\beta\in \id j_{\leq m}\setminus\mathcal P(\id j)}\subseteq (\id G)$.
\end{Notation}

\begin{Corollary}\label{1:1trabasi}
There is a bijective correspondence between the set of $[\mathcal P(\id j),m]$-marked bases and the set of $\mathcal P(J_{\geq m})$-marked bases.
\end{Corollary}

\begin{proof}
If $\id G$ is a $[\mathcal P(\id j),m]$-marked basis, it is sufficient to consider the set $\id G \cup \overline{\id G}$ and homogenize this set of polynomials. Up to multiplication by a suitable power of $x_0$, we obtain the $\mathcal P(J_{\geq m})$-marked basis  $G$ as in \eqref{eq:homBasis}.

Conversely, if $G$ is a $\mathcal P(J_{\geq m})$-marked basis, we immediately obtain a unique $[\mathcal P(\id j),m]$-marked basis following Definition \ref{marked affini}.
\end{proof}

The following result will give a better insight into the notion of $[\mathcal P(\id j),m]$-marked basis.

\begin{Corollary}\label{thm:equivBA}
Let $\id G$ be a $[\mathcal P(\id j),m]$-marked set. The followings are equivalent:
\begin{enumerate}[(i)]
\item \label{thm:equivBA_i} $\id G$ is a $[\mathcal P(\id j),m]$-marked basis;
\item \label{thm:equivBA_ii} $\Ax_{\leq t}=(\id G)_{\leq t} \oplus \langle\mathcal N(\id j)_{\leq t}\rangle$, for every $t\geq m$;
\item \label{thm:equivBA_iii}
$^aH_{\Ax/\id j}(t)= \,^aH_{\Ax/(\id G)}(t)$, for every $t\geq m$. 
\end{enumerate}
\end{Corollary}

\begin{proof}
We assume first that $\id G$ is a $[\mathcal P(\id j),m]$-marked basis and prove that statement \eqref{thm:equivBA_ii} holds. Let $f$ be a polynomial of $\Ax$ and $t:=\max\{m,\deg(f)\}$. By Theorem \ref{th:markedBasisA1}, $f$ has a unique $\id j$-normal form $\Nf(f)$ modulo $(\id G)$ with $\deg(\Nf(f))\leq t$. Hence, $f\in \Ax_{\leq t}$ can be written as $f=(f-\Nf(f))+\Nf(f)$, where $f-\Nf(f)\in (\id G)_{\leq t}$ and $\Nf(f)\in \langle \mathcal N(\id j)_{\leq t}\rangle$. The uniqueness of this writing is a consequence of the uniqueness of normal forms.

Conversely, assume now that $\Ax_{\leq t}=(\id G)_{\leq t} \oplus \langle\mathcal N(\id j)_{\leq t}\rangle$ for every $t\geq m$. Hence, for every $f\in \Ax_{\leq t}$, where $t=\max\{m,\deg(f)\}$, we consider the unique writing $f=f_1+f_2$ with $f_1\in (\id G)_{\leq t}$ and $f_2\in \langle \mathcal N(\id j)_{\leq t}\rangle$. Hence, $f_2$ is a $\id j$-reduced form modulo $\id G$ for the polynomial $f$, and $\deg(f_2)\leq t$. Observe that this reduced form is unique: if $f$  had two $\id j$-reduced forms modulo $(\id G)$, denoted by $g_1$ and $g_2$, then $g_1-g_2$ belongs to $(\id G)_{\leq t'} \cap \langle\mathcal N(\id j)_{\leq t'}\rangle$ for a suitable $t'\geq m$, against the hypothesis. By Theorem \ref{th:markedBasisA2}, $\id G$ is a $[\id j,m]$-marked basis.

The equivalence between \eqref{thm:equivBA_ii} and \eqref{thm:equivBA_iii} is immediate.
\end{proof}

Finally, \hskip 1mm we \hskip 1mm show \hskip 1mm that \hskip 1mm $[\mathcal P(\mathfrak j),m]$-marked bases \hskip 1mm have \hskip 1mm the expected \hskip 1mm good behavior with respect to the homogenization. We let\hskip 1mm $\id G^h:=\{f^h \ \vert \ f\in \id G\}$ and \hskip 1mm$\overline{\id G}^h:=\{f^h \ \vert \ f\in \overline{\id G}\}$.

\begin{theorem}\label{omogbene}
If $\id G$ is a $[\mathcal P(\id j),m]$-marked basis and $G$ is its corresponding $\mathcal P({\id j^h}_{\geq m})$-marked basis (in the sense of Corollary \ref{1:1trabasi}), then $(G)={(\id G)^h}_{\geq m}=(\id G^h\cup \overline{\id G}^h)_{\geq m}$ and $(\id G)^h=(\id G^h\cup \overline{\id G}^h)^\sat=(G)^{\sat}$.
\end{theorem}

\begin{proof}
For every ideal in $\Ax$, we can obtain its homogenization in $\Aox$ considering the ideal generated by the homogenization of a set of generators and saturating it with respect to the ideal $(x_0)$ (for example, see \cite[Corollary 4.3.8]{KR2}). 
Hence, if we consider the set of generators $\id G\cup \overline{\id G}$ of $(\id G)$, then we obtain ${(\id G)^h}=((\id G^h\cup \overline{\id G}^h):x_0^\infty)$. 

On the other hand, the ideal $(G)\subseteq \Aox$ belongs to $\Mf({\mathcal P}({\id j^h}_{\geq m}))$  and $((G):x_0^\infty)=(G)^{\sat}$ by Theorem \ref{bastaaffine}, in particular $((G):x_0^\infty)_{\geq m}={(G)^{\sat}}_{\geq m}=(G)$, because $G$ is $m$-saturated by Corollary \ref{sattronc}. 
By construction, $G$ is contained in $(\id G^h\cup \overline{\id G}^h)$. Thus, the inclusion $((G):x_0^\infty)\subseteq ((\id G^h\cup \overline{\id G}^h):x_0^\infty)$ is obvious. For the other inclusion, let $g$ be a polynomial in $((\id G^h\cup \overline{\id G}^h):x_0^\infty)$. Then, there is an integer $t$ such that $x_0^t g=\sum a_jx^{\delta_j}f_{\alpha_j}^h$, with $f_{\alpha_j}\in \id G\cup \overline{\id G}$, and by construction there is an integer $\overline m$ such that $x_0^{t+\overline m}g$ belongs to $(G)$. So, also the other inclusion holds true and we obtain $(G)^{\sat}=((G):x_0^\infty) =((\id G^h\cup \overline{\id G}^h):x_0^\infty)={(\id G)^h}$. In particular,  we have $(G)={(G)^{\sat}}_{\geq m}=((G):x_0^\infty)_{\geq m} =((\id G^h\cup \overline{\id G}^h):x_0^\infty)_{\geq m}={(\id G)^h}_{\geq m}$.

We have also obtained $(G)_{\geq m}\subseteq (\id G^h\cup \overline{\id G}^h)_{\geq m}$ and $(\id G^h\cup \overline{\id G}^h)_{\geq m}\subseteq {(\id G)^h}_{\geq m}$, hence the equality $(\id G^h\cup \overline{\id G}^h)_{\geq m}=(\id G)^h_{\geq m}$. Saturating both the homogeneous ideals, we get $(\id G)^h=(\id G^h\cup \overline{\id G}^h)^\sat$. 
\end{proof}

\begin{Corollary}
If $\id G$ is a $[\mathcal P(\id j),m]$-marked basis, then the satiety of the ideal $(\id G^h \cup \overline{\id G}^h)$ is bounded from above by $m$. 
\end{Corollary}

Theorem \ref{omogbene} highlights that $[\id j,m]$-marked bases behave well with respect to homogenization, similarly to Macaulay bases, for the standard grading \cite[Theorem 4.3.19]{KR2}. However, the $[\id j,m]$-marked basis $\id G$, jointly with $\overline{\id G}$, is not a Macaulay basis for the ideal $(\id G)$, because the set $\id G^h\cup \overline{\id G}^h$ in general does not generate $(\id G)^h$, a saturation is needed, as shown by the following example.


\begin{Example}
Consider $\id j=(x_2,x_1^4)$, $m=3$ and $\id G=\{x_2-x_1^3+x_1^2+x_1+2,x_1^4-2x_1^3-2-x_1\}\subset K[x_1,x_2]$ which is a $[\mathcal P(\id j),m]$-marked basis. By $\SGred$, we can compute the $\id j$-reduced forms modulo $(\id G)$ of the monomials in $\id j_{\leq m}\setminus \mathcal P(\id j)$ and construct in this way the following polynomials:
\[\begin{array}{l}
f_1:=x_2^3-9x_1^3-10-7x_1+21x_1^2, \quad f_2:= x_2^2x_1-x_1^3+6+x_1-3x_1^2,\\ 
f_3:= x_2^2+3x_1^3-2-3x_1-7x_1^2, \quad
f_4:=x_2x_1^2-x_1^3-2-3x_1+x_1^2, \\ f_5:=x_2x_1-x_1^3-2+x_1+x_1^2, 
\end{array}
\]
obtaining $\overline{\id G}=\{ f_1, f_2, f_3, f_4, f_5\}$. In this case, ${(\id G)^h}_{\geq 3}$ is equal to $(\id G^h\cup \overline{\id G}^h)_{\geq 3}$, but the equality does not hold if we consider the truncation of both ideals from degree $2$ on. Indeed, the polynomial $3 f_5+ f_3$ has degree $2$, its homogenization belongs to $(\id G)^h$ by definition, however it does not belong to $(\id G^h\cup \overline{\id G}^h)$. 
\end{Example}

\section{Effective criterion for $[\mathcal P(\id j),m]$-marked bases}\label{sectionJmBasis}

Also in this section, $\id j$ is a quasi-stable ideal in $\Ax$ and $m$ is a non-negative integer.

The following theorem will give an algorithmic criterion to test if a $[\mathcal P(\id j),m]$-marked set of polynomials is a $[\mathcal P(\id j),m]$-marked basis (see Section \ref{algoritmi}). 
Furthermore, this theorem will imply interesting theo\-re\-ti\-cal results which improve the understanding of the structure of marked bases and families (e.g. Corollaries \ref{sollsiz} and  \ref{cor:ro2}, Proposition \ref{prop:immersione}, Theorem \ref{th:flatness}).

\begin{theorem}\label{critsupermin} 
A $[\mathcal P(\id j),m]$-marked set $\id G$ is a $[\mathcal P(\id j),m]$-marked basis if and only if the two following statements hold:
\begin{enumerate}[(i)]
\item \label{critsupermin_i} for every $x^{\beta} \in {\id j}_{\leq m}\setminus \mathcal P(\id j)$, $x^{\beta}\SGred_\ast h_{\beta}$ with $h_{\beta}\in\langle\cN(\id j)_{\leq m}\rangle$;
\item \label{critsupermin_ii} for every $f_\alpha \in {\id G}$ and $x_i > \min(x^\alpha)$, \ $x_if_\alpha  \xrightarrow{\ \id G_{\mathcal P}\ }_\ast 0$.
\end{enumerate}
\end{theorem}

\begin{proof} 
If $\id G$ is a $[\mathcal P(\id j),m]$-marked basis, both items \eqref{critsupermin_i} and \eqref{critsupermin_ii} hold by Theorem \ref{th:markedBasisA1}. Indeed, by the uniqueness of $\id j$-reduced forms modulo $(\id G)$, for every $f\in (\id G)$, we have $\Nf(f)=0$ and this implies \eqref{critsupermin_ii}. Furthermore, the bound on the degree of the $\id j$-normal forms modulo $(\id G)$ implies \eqref{critsupermin_i}.

We now prove that if \eqref{critsupermin_i} and \eqref{critsupermin_ii} hold, then we have $\Ax_{\leq t}=(\id G)_{\leq t}\oplus \langle \mathcal N(\id j)_{\leq t}\rangle$ for every $t\geq m$ and then apply Corollary \ref{thm:equivBA}.

We first prove that, if item \eqref{critsupermin_i} holds, then $\Ax_{\leq t}=\langle \id G_{\mathcal P}\rangle_{\leq t}\oplus \langle \mathcal N(\id j)_{\leq t}\rangle$ for every $t\geq m$. By Proposition \ref{th:reductionAffineSet}, we have $\Ax =\langle \id G_{\mathcal P}\rangle \oplus \langle \mathcal N(\id j) \rangle$, because $\id G$ is a $[\mathcal P(\id j),m]$-marked set. It is enough to consider a term $x^\beta$ in $\Ax_{\leq t}$ with $t=\max\{m,\vert \beta\vert\}$, and to prove that the unique writing $x^\beta=g_\beta+h_\beta$ has $g_\beta \in\langle \id G_{\mathcal P}\rangle_{\leq t}$ and $h_\beta \in \langle \mathcal N(\id j)_{\leq t}\rangle$.

If $x^\beta\in \mathcal N(\id j)$, we take the writing $x^\beta=0+x^\beta$. If $x^\beta \in \mathcal P(\id j)$, then there is $f_\beta \in \id G$ such that $\Ht(f_\beta)=x^\beta$ and we have the writing $x^\beta=f_\beta+(x^\beta-f^\beta)$ and conclude observing that $x^\beta-f_\beta\in  \langle\mathcal N(\id j)_{\leq t}\rangle$  by the Definition \ref{marked affini}. If $x^\beta \in \id j_{\leq m}\setminus \mathcal P(\id j)$, by item \eqref{critsupermin_i} we can consider $x^{\beta}\SGred_\ast h_{\beta} \in \langle\mathcal N(\id j)_{\leq m}\rangle$ and obtain the writing $x^\beta=g_\beta+h_\beta$, where $g_\beta$ is the polynomial computed by the reduction process $\xrightarrow{\ \id G_{\mathcal P}\ }$ which belongs to  $\langle \id G_{\mathcal P}\rangle_{\leq t}$. For every other term in $\Ax$, we can repeat the arguments used in the final part of the proof of Theorem \ref{th:markedBasisA1}.

We conclude proving that item \eqref{critsupermin_ii} implies that $\langle \id G_{\mathcal P}\rangle=(\id G)$. We only prove the non-obvious inclusion. In particular, we prove that $x^\delta f_\alpha$ belongs to $\langle \id G_{\mathcal P}\rangle$, for every $f_\alpha \in \id G$ and for every $x^\delta$. 

If $x^\delta=x_i$ for some $i$, item \eqref{critsupermin_ii} implies that $x_if_\alpha$ belongs to   $\langle \id G_{\mathcal P}\rangle$. 

Assume now  that $\vert \delta \vert>1$, and by inductive hypothesis that $x^{\delta'} f_\alpha$ belongs to $\langle \id G_{\mathcal P}\rangle$ for every $x^{\delta'}<_{lex}x^{\delta}$.
We only consider the non-trivial case where $x_i:=\max(x^\delta)>\min(x^\alpha)$. We define $x^{\delta'}:=x^\delta/x_i<_{lex}x^\delta$ and by inductive hypothesis we can consider the writing $x^{\delta'}f_\alpha=\sum \lambda_{\gamma'\alpha'}x^{\gamma'}f_{\alpha'}$, with $x^{\gamma'}f_{\alpha'}\in\id G_{\mathcal P}$,  $\lambda_{\gamma'\alpha'}\in A$ and observe that $x^{\gamma'}<_{lex}x^{\delta'}$, by Lemma \ref{lemma:importante} \eqref{importante_iii}. Then we consider $x^\delta f_\alpha=\sum  \lambda_{\gamma'\alpha'}x_ix^{\gamma'}f_{\alpha'}$. If $x_i<\min(x^{\alpha'})$, then $x_ix^{\gamma'}f_{\alpha'}$ belongs to $\id G_{\mathcal P}$, otherwise it is sufficient to observe that $x_ix^{\gamma'}<_{lex}x^\delta$, hence by inductive hypothesis we have that $x_ix^{\gamma'}f_{\alpha'}\in \langle \id G_{\mathcal P}\rangle$.
\end{proof}

The following result highlights another feature of $[\mathcal P(\id j),m]$-marked bases that involves the notion of syzygy and is analogous to a property of Gr\"obner bases (see \cite{Artin,BaMu} for the role of this property in the study of flatness).

\begin{Corollary}\label{sollsiz}
If $\id G$ is a $[\mathcal P(\id j),m]$-marked basis, then every homogeneous syzygy of $\id j$ lifts to a syzygy of  $\id G$.
\end{Corollary}

\begin{proof}
By results of \cite[Section 5]{Seiler2009II}, a set of generators of the module of the first syzygies of $\id j$ is given by the pairs $(x_i,x^\delta)$ such that $x_ix^{\alpha_1}-x^\delta x^{\alpha_2}=0$, where $x^{\alpha_1},x^{\alpha_2}\in \mathcal P(\id j)$, $x_i>\max(x^{\alpha_1})$ and $x^\delta x^{\alpha_2}\in \mathcal C_{\mathcal P}(x^{\alpha_2})$.
Since $\id G$ is a $[\mathcal P(\id j),m]$-marked basis, for every $f_\alpha\in \id G$ and every $x_i>\min(x^\alpha)$, we have $x_i f_\alpha\SGred 0$ by Theorem \ref{critsupermin} \eqref{critsupermin_ii}. This means that
$x_i f_\alpha=\sum \lambda_ix^{\delta_i} f_{\alpha_i}$, where $x^{\delta_i}f_{\alpha_i}\in \mathfrak G_{\mathcal P}$. In particular, among the polynomials  $x^{\delta_i} f_{\alpha_i}$ there is $x^{\delta'} f_{\alpha'}$ such that $x_ix^\alpha\in \mathcal C_{\mathcal P}(x^{\alpha'})$.
\end{proof}

In the following, we will focus our attention on two issues that improve the application of Theorem  \ref{critsupermin}. 

\subsection{Minimal $m$ for a $[\mathcal P(\id j),m]$-marked basis} \label{conseguenze1}
Let $\rho$ be the satiety of the quasi-stable ideal $\mathfrak j\subseteq \Ax$. By Lemma \ref{lemma:importante} \eqref{importante_0} $\rho$ is the maximal degree of the terms in $\mathcal P(\id j)$ divisible by $x_1$. 

First of all, we show that, if $m\geq \rho$ and $\id G$ is a $[\mathcal P(\id j),m]$-marked basis, then the degree of the $[\mathcal P(\id j),m]$-normal form of every term in $\Ax$ is bounded from above by the maximum between $m-1$ and the degree of the term itself. 

\begin{Lemma}\label{ro}
Let $m\geq \rho$. If $\id G$ is a $[\mathcal P(\id j),m]$-marked basis, then for every term $x^\beta \in \Ax$ we have $\Nf(x^\beta)\in \langle \cN(\id j)_{\leq t}\rangle$ where $t=\max\{m-1,\vert\beta\vert\}$.
\end{Lemma}

\begin{proof}
By Theorem \ref{th:markedBasisA1}, 
$[\id j,m]$-reduced forms modulo $(\id G)$ are unique and for every $x^\beta \in \Ax$ the degree of $\Nf(x^\beta)$ is bounded from above by $\max\{m,\vert \beta \vert\}$. Furthermore, the $\id j$-normal forms modulo $(\id G)$ can be computed by $\SGred$ (Proposition \ref{th:reductionAffineSet}).  Then, for every $x^\beta \in \Ax$, we have $x^\beta\SGred \Nf(x^\beta)\in \langle \mathcal N(\id j)_{\leq t}\rangle$, where $t=\max\{m,\vert \beta \vert\}$. 
We now assume that $m\geq \rho$ and prove that for every $x^\beta \in \Ax$, the degree of $\Nf(x^\beta)$ is bounded from above by $\max\{m-1,\vert \beta \vert\}$. 

If $x^\beta \in \mathcal N(\id j)$, then $\Nf(x^\beta)=x^\beta \in \cN(\id j)_{\leq \vert\beta\vert}$. If $x^\beta\in \id j$ and $\vert\beta\vert\geq m$, then $t=\vert\beta\vert$ and, by definition of $[\id j,m]$-reduced form modulo $(\id G)$, we have $\supp(\Nf(x^\beta))\subseteq \cN(\id j)_{\leq \vert\beta\vert}$. 
If $x^\beta \in {\id j}$ with $\vert\beta\vert\leq m-1$, then its $[\id j,m]$-normal form has degree lower than or equal to $m$.
By Proposition \ref{th:reductionAffineSet} \eqref{rAS_iii}, there is a unique writing $x^\beta=\sum \lambda_{\delta\alpha}x^\delta f_\alpha +h_\beta$, where $x^\delta f_\alpha\in\id  G_{\mathcal P}$, $\lambda_{\delta\alpha}\in A$ and $h_\beta=\Nf(x^\beta)$, being $\id G$ a $[\mathcal P(\id j),m]$-marked basis. We consider the equality $x_1x^\beta=\sum \lambda_{\delta\alpha} x_1x^\delta f_\alpha +x_1\Nf(x^\beta)$ and observe that $\sum \lambda_{\delta\alpha}x_1x^\delta f_\alpha$ belongs to $\langle\id G_{\mathcal P}\rangle$, since $x_1\leq \min(x^\alpha)$ for every $x^\alpha\in \mathcal P(\id j)$. 

We can obtain $\Nf(x_1x^\beta)$ by further reducing $x_1\Nf(x^\beta)$. 
We focus on the terms $x_1x^\epsilon$ that appear in $x_1\Nf(x^\beta)$ with $\vert\epsilon \vert=m$.  Since the degree of $\Nf(x_1x^\beta)$ is bounded from above by $m$ by Theorem  \ref{th:markedBasisA1}, then $x_1x^\epsilon$ does not appear in $\Nf(x_1x^\beta)$, because its degree is $m+1$. Hence, $x_1x^\epsilon$ can be reduced by $\SGred$, in other words $x_1x^\epsilon$ belongs to $\id j$. By Lemma \ref{lemma:importante} \eqref{importante_ii}, we should have $x_1x^\epsilon\in \mathcal P(\id j)$, but this contradicts the hypothesis $m\geq \rho$.   As a consequence, there are no terms in $\Nf(x^\beta)$ whose degree exceeds $m-1$.
\end{proof}

\begin{theorem}\label{ro2}
A $[\mathcal P(\id j),m]$-marked set $\id G$, with $m \geq \rho$, is a $[\mathcal P(\id j),m-1]$-marked basis if and only if $\id G$ is a $[\mathcal P(\id j),m]$-marked basis. 
\end{theorem}

\begin{proof}
If $\id G$ is a $[\mathcal P(\id j),m-1]$-marked basis, then $\Ax_{\leq t}=(\id G)_{\leq t}\oplus \langle \cN(\id j)_{\leq t}\rangle $ for every $t\geq m-1$ by Corollary \ref{thm:equivBA}. Then  $\id G$ is also a $[\mathcal P(\id j),m]$-marked basis.

Conversely, let $\id G$ be a $[\mathcal P(\id j),m]$-marked basis. Then $\Ax_{\leq t}=(\id G)_{\leq t}\oplus \langle \cN(\id j)_{\leq t}\rangle$ for every $t\geq m$ and it is sufficient to prove that $\id G$ is a $[\mathcal P(\id j),m-1]$-marked set and that $\Ax_{\leq m-1}=\langle \cN(\id j)_{\leq m-1}\rangle\oplus (\id G)_{\leq m-1}$. Then, we conclude by Corollary \ref{thm:equivBA}.

By Theorem \ref{th:markedBasisA1} and Lemma \ref{ro}, for every $x^\alpha \in \mathcal P(\id j)$ with  $\vert\alpha\vert\leq m-1$,  the marked polynomial $f_\alpha=x^\alpha-\Nf(x^\alpha)$  belonging to $\id G$ has degree $\leq m-1$.  Hence, $\id G$ is a $[\mathcal P(\id j),m-1]$-marked set.

We now prove that for every $x^\gamma \in \Ax$ with  $\vert\gamma\vert\leq m-1$, there is a unique decomposition $x^\gamma=g_1+ g_2$ with $ g_1\in (\id G)_{\leq m-1}$ and $g_2 \in \langle \cN(\id j)_{\leq m-1}\rangle$.

Let $x^\gamma \in {\id j}$ (the other case being trivial) and $\vert \gamma \vert \leq m-1$. Since $\id G$ is a $[\id j,m]$-marked basis, we can compute $\Nf(x^\gamma)$ and see by Lemma  \ref{ro} that its degree is $\leq m-1$.

Therefore, $x^\gamma= (x^\gamma-\Nf(x^\gamma) )+\Nf(x^\gamma)\in (\id G)_{\leq m-1} + \langle \cN(\id j)_{\leq m-1}\rangle$. Moreover, the sum is direct as $(\id G)_{\leq m-1} + \langle \cN(\id j)_{\leq m-1}\rangle$ is contained in $(\id G)_{\leq m} \oplus  \langle \cN(\id j)_{\leq m}\rangle$.
 \end{proof}

Under the hypothesis that $\mathrm{char}(K)=0$, Theorem \ref{ro2} is analogous to \cite[Theorem 5.7]{BCLR} for strongly stable ideals, but the proof given here for a $[\mathcal P(\id j),m]$-marked basis is much easier and gives a better insight into the algebraic structure of marked bases. 

\begin{Corollary}\label{cor:ro2} Let $\id G$ be a $[\mathcal P(\id j),m]$-marked set with $m\geq \rho$.
\begin{itemize}
\item $\id G$ is a $[\mathcal P(\id j),m]$-marked basis if and only if it is a $[\mathcal P(\id j),\rho-1]$-marked basis.
\item  $\id G$ is a $[\mathcal P(\id j), \rho-1]$-marked basis if and only if 
\begin{enumerate}[(i)]
\item for all $x^\beta \in {\id j}_{\leq \rho-1}\setminus \mathcal P(\id j)$, $x^\beta\SGred  h_\beta$ with $h_\beta\in\langle \cN(\id j)_{\leq \rho-1}\rangle$;
\item for every $f_\alpha \in \id G$, for every $x_i>\min(x^\alpha)$, $x_if_\alpha\SGred 0$.
\end{enumerate}
\end{itemize}
\end{Corollary}

\subsection{The 0-dimensional case}\label{conseguenze2}

We now turn to the special case of a $[\id j,m]$-marked set $\id G$ with $K[\bf x]/\id j$ Artinian.  The algorithmic techniques that we can use to check if $\id G$ is a $[\mathcal P(\id j),m]$-marked basis are more efficient in this special case.
Indeed, if $K[\bf x]/\id j$ is Artinian, then $\cN(\id j)$ is a finite set of terms and  we have that $\cN(\id j)_{\leq\reg(\id j)-1}=\cN(\id j)_{\leq\reg(\id j)-1+r}$ for every positive integer $r$. Recall that $\reg(\id j)=\max\{\vert\alpha\vert:x^\alpha\in\mathcal P(\id j)\}$ for a quasi-stable ideal $\id j$.

%

\begin{Proposition}\label{critpunti}
Let $\id G$ be a $[\mathcal P(\id j),m]$-marked set, with $m\geq \rho$. If $\cN(\id j)$ is finite, then $\id G$ is a $[\mathcal P(\id j),\rho-1]$-marked basis if and only if $x_if_\alpha\SGred 0$, for every $f_\alpha \in \id G$ and for every $x_i> \min(x^\alpha)$. 
\end{Proposition}

\begin{proof}
Since $\cN(\id j)$ is finite, we have that $\rho=\reg(\id j)$ \cite[Lemma (1.7)]{BS}. Then for every $x^\beta  \in \id j_{\leq \rho-1}\setminus B_{\id j}$, if $x^\beta \SGred_\ast h_\beta$, then $h_\beta$ always belongs to $\langle \cN(\id j)_{\rho-1}\rangle$, since $\cN(\id j)$ is finite. Hence, condition \eqref{critsupermin_i} of Theorem \ref{critsupermin} need not to be checked.
\end{proof}

\section{Marked families and flatness}\label{sec:funt}

In the present section, we investigate the marked family $\Mf({\mathcal P}(\id j),m)$ for a given quasi-stable ideal $\id j\subseteq \Kx$ and an integer $m\geq 0$. For every  $K$-algebra $A$, in the following we will denote by $\id j$ also the ideal $\id j\cdot \Ax$ and by $\langle \mathcal N(\id j)_{\leq t}\rangle_A$ the $A$-module generated by $ \mathcal N(\id j)_{\leq t}$. Recall that  we denote by $J$ the saturated ideal $\id j^h$.

Our aim consists in proving that $\Mf({\mathcal P}(\id j),m)$ is endowed with an intrinsic scheme structure because it can be obtained as the scheme representing a functor.  In \cite{BLR,LR2} there is an analogous study for homogeneous marked bases over a \emph{strongly stable} ideal which inspires the study described here.

First of all we prove  that  the definition of marked basis  is natural, namely  marked bases are preserved by  the extension of scalars.

\begin{Lemma}\label{lem:costrBase}  In the above setting, let $I$ be an ideal in $\Ax$.
\begin{enumerate}
\item[(i)] 
$I$ is generated by a $[\mathcal P(\id j),m]$-marked basis if and only if  $\mathcal N(\id j)_{\leq t}$ is a basis for the $A$-module $A[\mathbf x]_{\leq t}/I_{\leq t}$ for every $t\geq m$. 
\item[(ii)] If $I$ is generated by a $[\mathcal P(\id j),m]$-marked basis $\id G$ and $\phi \colon A \rightarrow B$ is a $K$-algebra morphism, then $B[\mathbf x]_{\leq t }= ( I\otimes_A  B)_{\leq t}\oplus \langle \mathcal N(\id j)_{\leq t}\rangle_B$  for every $t\geq m$ and  $I\otimes _A B$ is generated by the $[\mathcal P(\id j),m]$-marked basis $\phi(\id G)$. 
\end{enumerate}
\end{Lemma}

\begin{proof} {\textit{(i)}} If  $I$ is generated by a $[\mathcal P(\id j),m]$-marked basis and $t\geq m$, then we conclude by Corollary \ref{thm:equivBA}.  

 For the converse, we construct a $[\mathcal P(\id j),m]$-marked set $\id G\subseteq I$. For every $x^\alpha\in \mathcal P(\id j)$, we denote by $t_\alpha$ the integer $\max\{\vert\alpha\vert,m\}$. By hypothesis, there are  $f_\alpha\in I_{\leq t_\alpha}$ and $h_\alpha\in \langle \mathcal N(\id j)_{\leq t_\alpha}\rangle$ such that $x^\alpha=f_\alpha+h_\alpha$. We  now prove that  the $[\mathcal P(\id j),m]$-marked set $\id G:=\{f_\alpha\}_{x^\alpha \in \mathcal P(\id j)}\subseteq A[\mathbf x]$  generates $I$. For every $f\in I$, by Proposition \ref{th:reductionAffineSet}, we consider the unique polynomial $h\in\langle \mathcal  N(\id j)\rangle$ such that $f\xrightarrow{\ \id G_{\mathcal P} \ }_\ast h$. Hence, for $t= \max\{\deg(f),\deg(h),m\}$, we have the writing  $f=(f-h)+h$ in $\langle \id G_{\mathcal P}\rangle_{\leq t}\oplus\langle\mathcal N(\id j)_{\leq t}\rangle$  (see Proposition \ref{th:reductionAffineSet} \eqref{rAS_iii}). Since $\langle \id G_{\mathcal P}\rangle_{\leq t}\subseteq I_{\leq t}$, then by hypothesis on $I$, $h=0$ and $f\in \langle \id G_{\mathcal P}\rangle_{\leq t}\subseteq (\id G)$.
By Corollary \ref{thm:equivBA}, we conclude that  $I$ is generated by the $[\mathcal P(\id j),m]$-marked basis $\id G$.

{\textit{(ii)}}  By hypothesis  $I$ is generated by a $[\mathcal P(\id j),m]$-marked basis  $\id G$, so that $I\otimes_AB$ is generated by the  $[\mathcal P(\id j),m]$-marked set  $\phi(\id G) $. 

Furthermore, we have $A[\mathbf x] =  I  \oplus \langle \mathcal N(\id j)\rangle_A$ and, by Corollary \ref{thm:equivBA},   $\Ax_{\leq t}=I_{\leq t } \oplus \langle \mathcal N(\id j)_{\leq t}\rangle_A$ for every $t\geq m$.

Recalling that the tensor product commutes with the direct sum  we obtain  both $B[\mathbf x]= (I \otimes_AB) \oplus \langle \mathcal N(\id j)\rangle_B $ and,  for every $t\geq m$, $B[\mathbf x]_{\leq t }= ( I_{\leq t}\otimes_A  B)\oplus \langle \mathcal N(\id j)_{\leq t}\rangle_B$; as a consequence   the inclusion of $B$-modules  $(I\otimes_AB)_{\leq t}\supseteq  I_{\leq t}\otimes_A  B$   turns ut to be an equality for every $t\geq m$. Therefore,   
$B[\mathbf x]_{\leq t }= (I\otimes_AB)_{\leq t}\oplus \langle \mathcal N(\id j)_{\leq t}\rangle_B$ and we conclude by {\textit{(i)}} that $\phi(\id G) $ is in fact a  $[\mathcal P(\id j),m]$-marked basis. 
\end{proof}

\begin{Definition}\label{def:functor}
We define the following functor between the category of Noetherian $K$-algebras and that of sets:
\[
\underline{\mathrm{Mf}}_{\mathcal P(\id j),m}:  \underline{\text{Noeth-}K\text{-Alg}}\rightarrow \underline{\mathrm{Set}}
\]
which associates to every Noetherian $K$-algebra $A$ the set
\[\underline{\mathrm{Mf}}_{\mathcal P(\id j),m} (A) :=\{I\subseteq A[\mathbf x]\ \vert\ A[\mathbf x]_{\leq t}=I_{\leq t}\oplus \langle \mathcal N(\id j)_{\leq t}\rangle_A, \ \forall t\geq m\}.
\]
and to every $K$-algebra morphism  $\phi : A\rightarrow B$ the function  
$$\overline{\phi} \colon \underline{\mathrm{Mf}}_{\mathcal P(\id j),m}(A) \rightarrow \underline{\mathrm{Mf}}_{\mathcal P(\id j),m}(B)  \hbox{\  given by \ } \overline{\phi} (I )= I\otimes_A B. $$ 

Note that $\overline{\phi} (I)$ belongs to $\underline{\mathrm{Mf}}_{\mathcal P(\id j),m}(B) $ by Lemma \ref{lem:costrBase}{\textit{(ii)}}.
\end{Definition}

\begin{Remark}\label{rk:NotaMF}The symbol we have chosen to denote the functor $\underline{\mathrm{Mf}}_{\mathcal P(\id j),m}$ hints at a close relation with $[\mathcal P(\id j), m]$-marked bases.
Indeed,   by the previous  Lemma \ref{lem:costrBase}{\textit{(i)}}, for every Noetherian $K$-algebra $A$, $\underline{\mathrm{Mf}}_{\mathcal P(\id j),m} (A)$ is precisely the   $[\mathcal P(\id j), m]$-marked family in $\Ax$.  
\end{Remark}

We now explicitly construct an ideal $\id U\subset K[C]$ so that the affine scheme $\mathrm{Spec}(K[C]/\id U)$ represents the functor $\underline{\mathrm{Mf}}_{\mathcal P(\id j),m}$.

For every $x^\alpha \in \mathcal P(\id j)$, let $t_\alpha$ be the integer $\max\{m,\vert\alpha\vert\}$. We define the following set of parameters: 
\begin{equation}\label{eq:parameters}
C:=\{C_{\alpha\eta}\ \vert \ x^\alpha\in \mathcal P(\id j), x^\eta \in \mathcal N(\id j)_{\leq t_\alpha}\}.
\end{equation}
For each $x^\alpha \in \mathcal P(\id j)$, we consider the following marked polynomial in $K[C][\mathbf x]$:
\[
f_\alpha:=x^\alpha-\sum_{x^\eta \in \mathcal N(\id j)_{\leq t_\alpha}}C_{\alpha\eta}x^\eta.
\]
We collect these marked polynomials in the following $[\mathcal P(\id j),m]$-marked set.
\begin{equation}\label{eq:MSwithParam}
\id G:=\{f_\alpha\ \vert\ x^\alpha \in \mathcal P(\id j)\}\subseteq K[C][\mathbf x].
\end{equation}

For every $x^\beta \in \id j_{\leq m}\setminus \mathcal P(\id j)$, let $h_\beta$ be the unique polynomial in $\langle \mathcal N(\id j)\rangle_{K[C]}$ such that $x^\beta\xrightarrow{\ \id G_{\mathcal P} \ }_\ast h_\beta$ (see Proposition \ref{th:reductionAffineSet}). We  write 
\begin{equation}\label{eq:decomposition}
h_\beta=h_{\beta}^{\left(\leq m\right)}+h_{\beta}^{(>m)}
\end{equation}
with $h_{\beta}^{(\leq m)}\in \langle \mathcal N(\id j)_{\leq m}\rangle_{K[C]}$ and $h_{\beta}^{(>m)}\in \langle\mathcal N(\id j)_{\leq \deg(h_{\beta})}\setminus \mathcal N(\id j)_{\leq m}\rangle_{K[C]}$.

For every $f_\alpha \in \id G$ and for every $x_i>\min(x^\alpha)$, let $h_{i\alpha}$ be the unique polynomial in $\langle \mathcal N(\id j)\rangle\subseteq K[C][\mathbf x]$ such that $x_if_\alpha\xrightarrow{\ \id G_{\mathcal P} \ }_\ast h_{i\alpha}$ (see Proposition \ref{th:reductionAffineSet}).

\begin{Notation}\label{not:U}
We denote by $\id U\subset K[C]$ the ideal generated by the coefficients in $K[C]$ of the polynomials $h_{\beta}^{(>m)}$, for every $x^\beta \in \id j_{\leq m}\setminus \mathcal P(\id j)$, and by the coefficients of the polynomials $h_{i\alpha}$, for every $f_\alpha \in \id G$ and for every $x_i>\min(x^\alpha)$.
\end{Notation}

\begin{Remark}\label{rem:funt}
The ideal $\id U$ is exactly the ideal constructed by the \emph{su\-per\-mi\-ni\-mal reduction} of homogeneous polynomials in \cite[Proposition 5.9]{BCLR}, under the hypothesis that $J=\id j^h$ is strongly stable. Hence, our construction includes the one in \cite{BCLR}, but it is more general, since we assume the weaker hypothesis that $\id j$ (and hence $\id j^h$) is quasi-stable. 
\end{Remark}

\begin{theorem}\label{thm:schemamarcatoA}
In the above setting, $\underline{\mathrm{Mf}}_{\mathcal P(\id j),m}$ is a representable functor, whose representing scheme is ${\mathrm{Mf}}_{\mathcal P(\id j),m}:=\mathrm{Spec}(K[C]/\id U)$, that we call \emph{$[\mathcal P(\id j),m]$-marked scheme}.
\end{theorem}

\begin{proof}
This result and its proof are analogous to \cite[Theorem 2.6]{LR2} for the case of strongly stable ideals. For the sake of completeness, we propose also here the proof.

Consider the $[\mathcal P(\id j),m]$-marked set $\id G$ in \eqref{eq:MSwithParam}.  By Lemma \ref{lem:costrBase}, for every $K$-algebra $A$, a $[\mathcal P(\id j),m]$-marked set in $A[\mathbf x]$ is uniquely and completely defined by a $K$-algebra morphism $\varphi: K[C]\rightarrow A$, defined by $\varphi(C_{\alpha\gamma})=c_{\alpha\gamma}\in A$ for every $x^\alpha \in \mathcal P(\id j), x^\gamma \in \mathcal N(\id j)_{\max \{\vert\alpha\vert, m\}}$. We extend $\varphi$ to a morphism from $K[C][\mathbf x]$ to $A[\mathbf x]$ in the obvious way.

It is sufficient to observe, by Theorem \ref{critsupermin}, that $\varphi(\id G)\subset A[\mathbf x]$ is a $[\mathcal P(\id j),m]$-marked basis if and only if the generators of $\id U$ vanish at  $c_{\alpha\gamma}\in A$. 

Hence, $\varphi(\id G)$ is a $[\mathcal P(\id j),m]$-marked basis in $A[\mathbf x]$ if and only if $\ker(\varphi)\supseteq \mathfrak U$. In this case, $\varphi$ factors through $K[C]/\id U$. The induced $K$-algebra morphism  from $K[C]/\id U$ to $A$ defines a scheme morphism $\mathrm{Spec}(A) \rightarrow \mathrm{Spec}(K[C]/\id U)$. Therefore, the scheme $\Spec(K[C]/\mathfrak U)$ represents the functor $\underline{\mathrm{Mf}}_{\mathcal P(\id j),m}$.
\end{proof}

\begin{Proposition}\label{prop:rho}
Let $\rho$ be the satiety of $\id j$ and $A$ any Noetherian $K$-algebra.
\begin{itemize}
\item[(i)] If $m\geq\rho$, then $\underline{\mathrm{Mf}}_{\mathcal P(j),m}(A)= \underline{\mathrm{Mf}}_{\mathcal P(j),\rho-1}(A)$; in particular, $\underline{\mathrm{Mf}}_{\mathcal P(j),m}=\underline{\mathrm{Mf}}_{\mathcal P(j),m-1}$.
\item[(ii)] If $m<\rho-1$, then $\underline{\mathrm{Mf}}_{\mathcal P(j),m}(A)\subseteq \underline{\mathrm{Mf}}_{\mathcal P(j),\rho-1}(A)$; in particular, $\underline{\mathrm{Mf}}_{\mathcal P(j),m-1}$ is a closed  subfunctor of $\underline{\mathrm{Mf}}_{\mathcal P(j),m}$.
\end{itemize}
\end{Proposition}

\begin{proof} First, we assume $m\geq \rho$ and
show $\underline{\mathrm{Mf}}_{\mathcal P(j),m}(A)= \underline{\mathrm{Mf}}_{\mathcal P(j),m-1}(A)$ using analogous arguments of \cite[Theorem 3.4]{LR2}. If $I$ is an ideal of $\underline{\mathrm{Mf}}_{\mathcal P(j),m}(A)$, then  
$A[\mathbf x]_{\leq t}=I_{\leq t}\oplus \langle \mathcal N(\id j)_{\leq t}\rangle_A$, for every $t\geq m$, 
and by Lemma \ref{lem:costrBase} $I$ is generated by a $[\mathcal P(\id j),m]$-marked basis $\id G$. By Theorem \ref{ro2}, $\id G$ is also a $[\mathcal P(\id j),m-1]$-marked basis, so by Corollary \ref{thm:equivBA} $I$ belongs to $\underline{\mathrm{Mf}}_{\mathcal P(j),m-1}(A)$. Conversely, it is enough to observe that a $[\mathcal P(\id j),m-1]$-marked basis is always also a $[\mathcal P(\id j),m]$-marked basis. In case $m < \rho-1$ it is sufficient to make the same observation.

It remains to show that that $\underline{\mathrm{Mf}}_{\mathcal P(j),m-1}$ is a closed subfunctor of $\underline{\mathrm{Mf}}_{\mathcal P(j),m}$. We exploit an idea already applied in the proof of \cite[Theorem 3.4]{LR2} that we adapt to our situation. 

For the integer $m$, let $C^{(m)}$ be the set of parameters as in \eqref{eq:parameters}, $\id U^{(m)}$ the ideal as in Notation \ref{not:U} and let $C^{(m-1)}$ and $\id U^{(m-1)}$ the analogous ones for the integer $m-1$. By construction  we have $C^{(m-1)}\subseteq C^{(m)}$. 
Hence we can consider the surjective homomorphism $\Phi: K[C^{(m)}] \rightarrow K[C^{(m-1)}]$ which associates to $C_{\alpha \eta}^{(m)}$ the coefficient of $x^\eta$ in the polynomial $f_\alpha$ when considered for the integer $m-1$. Hence, if $x^\eta$ does not appear in such a polynomial, we associate $0$ to $C_{\alpha \eta}^{(m)}$ and we obtain $\mathfrak U^{(m-1)}=\mathfrak U^{(m)}+ (C_{\alpha \eta}^{(m)}:\Phi(C_{\alpha \eta}^{(m)})=0)$. So, the homomorphism $\Phi$ induces a surjective homomorphism $\bar{\Phi}: K[C^{(m)}]/\id U^{(m)} \rightarrow K[C^{(m-1)}]/\id U^{(m-1)}$ which gives an isomorphism between ${\mathrm{Mf}}_{\mathcal P(j),m-1}$ and a closed subscheme of ${\mathrm{Mf}}_{\mathcal P(j),m}$. 
\end{proof}

In \cite{CMR13} the authors define the representable functor 
\begin{equation}\label{eq:funtore omogeneo}
\underline{\mathrm{Mf}}_{\mathcal P(J_{\geq m})}:  \underline{\text{Noeth-}K\text{-Alg}}\rightarrow \underline{\mathrm{Set}}
\end{equation}
which associates to every Noetherian $K$-algebra $A$ the set
\[\underline{\mathrm{Mf}}_{\mathcal P(J_{\geq m})} (A) :=\{I\text{ homogeneous ideal   in }A[\mathbf x]\  \vert\ A[\mathbf x]=I\oplus\langle \mathcal N(J_{\geq m})\rangle_A\}
\]
and to every $K$-algebra morphism  $\phi : A\rightarrow B$ the function  
$${\phi} \colon \underline{\mathrm{Mf}}_{\mathcal P(J_{\geq m})}(A) \rightarrow \underline{\mathrm{Mf}}_{\mathcal P(J_{\geq m})}(B) \ \hbox{ given by } \  {\phi} (I )= I\otimes_A B.$$ 

We now show that the $[\mathcal P(\id j),m]$-marked scheme  ${\mathrm{Mf}}_{\mathcal P(\id j),m}=\mathrm{Spec}(K[C]/\id U)$ representing $\underline{\mathrm{Mf}}_{\mathcal P(\id j),m}$ represents also the functor $\underline{\mathrm{Mf}}_{\mathcal P(J_{\geq m})}$.

\begin{Proposition}\label{prop:ponte}
For every $K$-algebra $A$, there is a bijective correspondence between the elements in $\underline{\mathrm{Mf}}_{\mathcal P(J_{\geq m})}(A)$ and those in $\underline{\mathrm{Mf}}_{\mathcal P(\id j),m}(A)$.
\end{Proposition}

\begin{proof} It is sufficient to observe that an ideal $\id i\subset A[\mathbf x]$ is generated by a $[\mathcal P(\id j),m]$-marked basis if and only if the ideal $(\id i)^h_{\geq m}\subset A[x_0,\mathbf x]$ is generated by the corresponding $\mathcal P(J_{\geq m})$-marked basis. Indeed, we have the bijective correspondence between $[\mathcal P(\id j),m]$-marked bases in $\Ax$ and $\mathcal P(J_{\geq m})$-marked bases in $\Aox$ described in Corollary \ref{1:1trabasi} and the good behavior of marked bases in $\Ax$ with respect to homogenization investigated by Theorem \ref{omogbene}. Thus, we obtain a bijection between $\underline{\mathrm{Mf}}_{\mathcal P(\id j),m}(A)$ and $\underline{\mathrm{Mf}}_{\mathcal P(J_{\geq m})}(A)$, for every Noetherian $K$-algebra $A$. 
\end{proof}

\begin{Remark}\label{rem:ident}
Observe that if $I\in \underline{\mathrm{Mf}}_{\mathcal P(J_{\geq m})}(A)$ and $\id i\in \underline{\mathrm{Mf}}_{\mathcal P(\id j),m}(A)$ are matched by the correspondence of Proposition \ref{prop:ponte}, then they define (up to homogenization of $\id i$) the same projective scheme in $\mathbb P^n_A$. 
\end{Remark}

\begin{Corollary}
The affine scheme ${\mathrm{Mf}}_{\mathcal P(\id j),m}$ represents the functor $\underline{\mathrm{Mf}}_{\mathcal P(J_{\geq m})}$ and $\underline{\mathrm{Mf}}_{\mathcal P(\id j),m}\simeq \underline{\mathrm{Mf}}_{\mathcal P(J_{\geq m})}$. 
\end{Corollary}

\begin{proof}
This is an immediate consequence of Proposition \ref{prop:ponte}.
\end{proof}

\begin{Corollary}\label{cor:sottofuntori}
Let $\rho$ be the satiety of $\id j$. If $m\geq \rho$, then $\underline{\mathrm{Mf}}_{\mathcal P(J_{\geq m})}=\underline{\mathrm{Mf}}_{\mathcal P(J_{\geq m-1})}$. If $m<\rho-1$, then $\underline{\mathrm{Mf}}_{\mathcal P(J_{\geq m-1})}$ is a closed  subfunctor of $\underline{\mathrm{Mf}}_{\mathcal P(J_{\geq m})}$. 
\end{Corollary}

\begin{proof}
This is an immediate consequence of Propositions \ref{prop:rho} and \ref{prop:ponte}.
\end{proof}

Referring to the brief but exhaustive description given in \cite[Section 4]{LR2} and the references therein, for any positive integer $n$ and any Hilbert polynomial $p(t)$, we consider the Hilbert functor $\underline{\mathrm{Hilb}}_{p(t)}^n: \underline{\text{Noeth-}K\text{-Alg}}\rightarrow \underline{\mathrm{Set}}$ which associates to any Noetherian $K$-algebra $A$ the set
\begin{equation}\label{eq:defH}
\underline{\mathrm{Hilb}}_{p(t)}^n(A) = \Bigl\{X\subset \mathbb P_A^n: \ X\rightarrow \mathrm{Spec}(A)  \text{ is flat and has fibers with Hilbert polynomial } p(t) \Bigr\}
\end{equation}
and to any $K$-algebra homomorphism $\phi: A \rightarrow B$ the map
$$\begin{array}{lccc}\underline{\mathrm{Hilb}}_n^{p(t)}(\phi): &\underline{\mathrm{Hilb}}_n^{p(t)}(A) &\longrightarrow &\underline{\mathrm{Hilb}}_n^{p(t)}(B)\\
& X &\longmapsto &X \times_{\mathrm{Spec}(A)} \mathrm{Spec}(B).
\end{array}
$$
It is well known that the Hilbert functor is representable and that it can be described as a closed subfunctor of the Grassmannian functor $\underline{\mathrm{Gr}}_{p(r)}^N$ by a natural transformation $\underline{\mathcal H}: \underline{\mathrm{Hilb}}_n^{p(t)}\longrightarrow \underline{\mathrm{Gr}}_{p(r)}^N$, where $r$ is the Gotzmann number of $p(t)$ and $N=\binom{n+r}{n}$. The Grassmannian functor has a well-known cover by open subfunctors \cite[Section III.2.7 and Exercise VI - 18]{EH} given by the Pl\"ucker embedding, which in our framework are defined in the following way. For any set $\mathcal N$ of $p(r)$ distinct monomials of $K[x_0,\mathbf x]_r$, let $\mathfrak J$ be the monomial ideal generating by the monomials of $K[x_0,\mathbf x]_r$ outside $\mathcal N$. The open subfunctor $\underline{\mathrm{G}}_{\mathcal N}$ of $\underline{\mathrm{Gr}}_{p(r)}^N$ associates to every Noetherian $K$-algebra $A$ the set
$$\underline{\mathrm{G}}_{\mathcal N}(A):=\{ A\text{-submodules } L\subseteq A[x_0,\mathbf x]_r \text{ such that } \mathcal N \text{ generates } A[x_0,\mathbf x]_r/L \}.
$$
Using the terminology we have already introduced, if the monomials    of $K[x_0, \mathbf x]_r\setminus \mathcal N$ form the Pommaret basis of a quasi-stable monomial ideal $\mathfrak J$, then 
$$\underline{\mathrm{G}}_{\mathcal N}(A)=\{ A\text{-submodules } L\subseteq A[x_0,\mathbf x]_r \text{ generated by a $\mathcal{P}(\mathfrak J)$-marked set } \}
$$
and 
$$\underline{\mathrm{H}}_{\mathcal N}(A)=\{  L\in \underline{\mathrm{G}}_{\mathcal N}(A) \text{ and  $p(t)$ is  the Hilbert polynomial of  } A[x_0,\mathbf x]/(L)  \}.
$$
By means of the natural transformation $\underline{\mathcal H}$, the open cover given by the subfunctors $\underline{\mathrm{G}}_{\mathcal N}$ induces an open cover of open subfunctors  $\underline{\mathrm{H}}_{\mathcal N}$  of the Hilbert functor.

In \cite{LR2} a relation between $\underline{\mathrm{H}}_{\mathcal N}(A)$ and $\underline{\mathrm{Mf}}_{\mathcal P(\mathfrak J)_{\geq r}}$ is studied when $\mathfrak J$ is a saturated strongly stable ideal \cite[Lemma 4.1, Corollaries 4.2 and 4.3]{LR2}. We now study the analogous relation on the more general assumption that $\mathfrak J$ is a saturated quasi-stable ideal.  

\begin{Lemma}\label{lemma:immersione}
Let $p(t)$ be a Hilbert polynomial with Gotzmann number $r$ and $\mathfrak J$ be a saturated quasi-stable ideal. Then, for every Noetherian $K$-algebra $A$
$${\underline{\mathrm H}}_{\mathcal N(\mathfrak J)_r}(A)\not=\emptyset \Leftrightarrow \text{ the Hilbert polynomial of } A[x_0,\mathbf x]/\mathfrak J \text{ is } p(t).
$$
and, if ${\underline{\mathrm H}}_{\mathcal N(\mathfrak J)_r}(A)\not=\emptyset $, then
$$\underline{\mathrm{H}}_{\mathcal N(\mathfrak J)_r}(A)=\underline{\mathrm{Mf}}_{\mathcal P(\mathfrak J)_{\geq r}} (A).
$$
\end{Lemma}

\begin{proof}
This result and its proof are analogous to \cite[Lemma 4.1]{LR2} for the case of strongly stable ideals.

First of all we observe that equality is a local property; therefore   
without loss of generality we
assume that $A$ is  local. 

If the Hilbert polynomial of $A[x_0,\mathbf x]/\mathfrak J$ is $p(t)$ then $\mathrm{Proj}(A[x_0,\mathbf x]/\mathfrak J)$ belongs to ${\underline{\mathrm H}}_{\mathcal N(\mathfrak J)_r}(A)$.
Conversely, assume that $X$ is a scheme in ${\underline{\mathrm H}}_{\mathcal N(\mathfrak J)_r}(A)$, let  $I_X$ be the saturated defining ideal of $X$, and let $I:=(I_X)_{\geq r}$. Then, by definition of $\mathcal P(\mathfrak J_{\geq r})$-marked set and by \cite[Theorem 5.9]{CMR13}, for every $m\geq r$, $I_m$ has a free direct summand $P_m$ with rank as an $A$-module equal to that of $\mathfrak J_m$. Then, for every $m\geq r$ the rank of  $A[x_0,\mathbf x]_m/\mathfrak J_m$ cannot be smaller  than that of  $A[x_0,\mathbf x]_m/I_m$, namely than the value $p(m)$ of the Hilbert polynomial $p(t)$ at $m$,  because $r$ is the Gotzmann number of $p(t)$. On the other hand, this rank  cannot be larger than $p(m)$ by Macaulay's Estimate on the Growth of Ideals (see for example \cite[Theorem 3.3]{Gr}).   
Then, the Hilbert polynomial of $ \mathfrak J_m$ is $p(t)$.

Hence, in every degree $m\geq r$, the ideal $I_m$ coincides with the free $A$-module $P_m$, again by  \cite[Theorem 5.9]{CMR13} we obtain $I_m\oplus \langle \mathcal N(\mathfrak J)\rangle_m=P_m\oplus \langle \mathcal N(\mathfrak J)\rangle_m=A[x_0, \mathbf x]_m$. Therefore, by definition  $(I_X)_{\geq r} \in \underline{\mathrm{Mf}}_{\mathcal P(\mathfrak J_{\geq r})} $.

Conversely, if $I\in \underline{\mathrm{Mf}}_{\mathcal P(\mathfrak J_{\geq r})}(A)$, then $I$ is generated by a $\mathcal P(\mathfrak J_{\geq r})$-marked set and its Hilbert polynomial is $p(t)$, so that the scheme $X$ defined by $I$ belongs to $\underline{\mathrm{H}}_{\mathcal N(\mathfrak J)_r}(A)$.
\end{proof}

\begin{Proposition}\label{prop:immersione}
Let $\rho$ be the satiety of $\id j$ and $p(t)$ the Hilbert polynomial of $K[x_0,\mathbf x]/J$ with Gotzmann number $r$.
\begin{enumerate}[(i)]
\item \label{flat_i} $\underline{\mathrm{H}}_{\mathcal N(J)_r} = \underline{\mathrm{Mf}}_{\mathcal P(J_{\geq r})}\simeq \underline{\mathrm{Mf}}_{\mathcal P(\id j),r}$. 
\item \label{flat_ii} For every $m\geq 0$, $\underline{\mathrm{Mf}}_{\mathcal P(J_{\geq m})}$ and $\underline{\mathrm{Mf}}_{\mathcal P(\id j),m}$ are locally closed subfunctors of $\underline{\mathrm{Hilb}}_{p(t)}^n$.  If $m\geq \rho-1$, then  $\underline{\mathrm{Mf}}_{\mathcal P(J_{\geq m})}$ and $\underline{\mathrm{Mf}}_{\mathcal P(\id j),m}$ are open subfunctors of $\underline{\mathrm{Hilb}}_{p(t)}^n$.
\item \label{flat_iii} For every $m\geq 0$, the scheme  $ \mathrm{Mf}_{\mathcal P(\id j),m}=\Spec(K[C]/\mathfrak U)$ that represents the functor $\underline{\mathrm{Mf}}_{\mathcal P(\id j),m}$  can be canonically embedded as a locally closed subscheme of  the Hilbert scheme $\mathrm{Hilb}_n^{p(t)}$. 
\end{enumerate}
\end{Proposition}

\begin{proof}
Item \eqref{flat_i} is a direct consequence of Lemma \ref{lemma:immersione} and  Proposition \ref{prop:ponte}.

For what concerns item \eqref{flat_ii}, it is enough to apply item \eqref{flat_i} and Proposition \ref{prop:rho}, recalling that $\underline{\mathrm{H}}_{\mathcal N(J)_r}$ is an open subfunctor of the Hilbert scheme.

Item \eqref{flat_iii} is a direct reformulation of item \eqref{flat_ii}. The embedding that we consider is the one of Remark \ref{rem:ident}.
\end{proof}

Consider $\id G$ as in \eqref{eq:MSwithParam} and $\widetilde{\id G} \subset (K[C]/\id U)[\mathbf x]$, where $\widetilde{\id G}$ is the $[\id j,m]$-marked basis obtained from $\id G$ replacing every coefficient $C_{\alpha\eta}$ by its image in $K[C]/\mathfrak U$.
Observe that by  Remark \ref{rk:NotaMF}   $ \widetilde{\id G}$ is the universal family of the functor  $\underline{\mathrm{Mf}}_{\mathcal P(\id j),m}$.

\begin{theorem}\label{th:flatness}
Let $\id j$ be a quasi-stable ideal in $K[\mathbf x]$, $m$ a positive integer and $A$ a $K$-algebra.  
Every $K$-algebra morphism  $K[C]/\mathfrak U\rightarrow A$ defines the following flat family:
\[
\Proj\left( \left.\raisebox{.2em}{$(K[C]/\id U)[x_0,\mathbf x]$}\middle/\raisebox{-.2em}{$\left(\widetilde{\id G}\right)^h$}\right.\right)\times_{\Spec(K[C]/\id U)} \Spec(A)\rightarrow \Spec(A).
\]
\end{theorem}

\begin{proof} 
Let $\phi: K[C]/\id U\rightarrow A$ be a morphism of $K$-algebras. By composing the corresponding morphism of schemes with the embedding  $\Spec(K[C]/\id U)\hookrightarrow \mathrm{Hilb}_{p(t)}^n$ of Proposition \ref{prop:immersione} \eqref{flat_iii}, we get a morphism $\widehat\phi:\Spec(A)\rightarrow \mathrm{Hilb}_{p(t)}^n$. By definition of scheme representing a functor, $\mathrm{Hom}(\Spec(A),\mathrm{Hilb_{p(t)}^n})=\underline{\mathrm{Hilb}}_{p(t)}^n(A)$. Hence, $\hat\phi$ corresponds to a family $X_{\widehat \phi}\subseteq \mathbb P^n_A$ which is flat over $\Spec(A)$, by \eqref{eq:defH}.

More precisely, if we consider $A=K[C]/\id U$, $\phi=\mathrm{Id}$, the universal family $\widetilde {\id G}$ of the functor $\underline{\mathrm{Mf}}_{\mathcal P(\id j),m}$ and the projection morphism
\[
(K[C]/\id U)[x_0,\mathbf x]\rightarrow \left.\raisebox{.2em}{$(K[C]/\id U)[x_0,\mathbf x]$}\middle/\raisebox{-.2em}{$\left(\widetilde{\id G}\right)^h$}\right. , 
\]
we obtain the following flat family on $\Spec(K[C]/\id U)$
\begin{center}
\begin{tikzcd} X_{\widehat{\mathrm{Id}}}=\Proj\left( \left.\raisebox{.2em}{$(K[C]/\id U)[x_0,\mathbf x]$}\middle/\raisebox{-.2em}{$\left(\widetilde{\id G}\right)^h$}\right.\right)\arrow[rd] \arrow[r,hook] & \PP^n_K\times\Spec(K[C]/\id U)\arrow[d]\\ & \Spec(K[C]/\id U) 
\end{tikzcd}
\end{center}
where $\times$ denotes the fibered product over $\Spec(K)$ and the vertical arrow is the projection on the second factor. 
For every  $K$-algebra morphism $\phi: K[C]/\id U\rightarrow A$, we obtain  $X_{\widehat{\phi}}=X_{\widehat{\mathrm{Id}}}\times_{\Spec(K[C]/\id U)} \Spec(A)$.
\end{proof}

The result of Theorem \ref{th:flatness} is not obvious. In the following example, we exhibit a family of ideals parameterized over an affine line, which is not flat although the ideals share the same affine Hilbert polynomial. 

\begin{Example}\label{ex:no flat}
Consider $K=\mathbb C$, $n=5$. Let $U_T$ be the set containing the following polynomials in $K[\mathbf x,T]=\mathbb C[x_1,\cdots,x_5,T]$:
\vskip 2mm
$\id f_1:=144\,x_{{1}}^{2}+284\,x_{{1}}x_{{2}}-317\,x_{{1}}x_{{3}}-212\,x_{{2}
}x_{{3}}+72\,x_{{3}}^{2}-43\,x_{{3}}x_{{4}},$

$\id f_2:= x_1x_4-x_3x_4-x_1x_2+x_2x_3,$

$\id f_3:=x_{{1}}x_{{2}}-x_{{1}}x_{{3}}-2\,x_{{2}}x_{{3}}+x_{{2}}x_{{4}}+x_{{3}}
x_{{4}},$

$\id f_4:=-4\,x_{{1}}x_{{2}}+x_{{1}}x_{{3}}+4\,x_{{1}}x_{{5}}+4\,x_{{2}}x_{{3}}-
5\,x_{{3}}x_{{4}}
,$

$\id f_5:=4\,x_{{1}}x_{{2}}-5\,x_{{1}}x_{{3}}-8\,x_{{2}}x_{{3}}+4\,x_{{2}}x_{{5}
}+5\,x_{{3}}x_{{4}},$

$\id f_6:=-4\,x_{{1}}x_{{2}}+3\,x_{{1}}x_{{3}}+4\,x_{{2}}x_{{3}}-7\,x_{{3}}x_{{4
}}+4\,x_{{3}}x_{{5}}
,$

$\id f_7:=-90\,x_{{1}}^{2}-56\,x_{{1}}x_{{2}}+71\,x_{{1}}x_{{3}}+18\,x_{{2}}
^{2}-34\,x_{{2}}x_{{3}}+109\,x_{{3}}x_{{4}}+18\,x_{{4}}^{2}
,$

$\id f_8:=4\,x_{{1}}x_{{2}}-5\,x_{{1}}x_{{3}}-4\,x_{{2}}x_{{3}}+x_{{3}}x_{{4}}+4
\,x_{{4}}x_{{5}}
,$

$\id f_9:=48\,x_{{1}}^{2}+68\,x_{{1}}x_{{2}}-83\,x_{{1}}x_{{3}}-44\,x_{{2}}x_{
{3}}-25\,x_{{3}}x_{{4}}+12\,x_{{5}}^{2}
,$

$\id f_{10}(T):=x_{{3}}x_{{4}}-x_{{2}}x_{{3}}+T \left( x_{{1}}^{3}+x_{{2}}^{3}+x_
{{2}}^{2} \right) .
$
\vskip 2mm

\noindent Let $\id i_T \subseteq K[\mathbf x,T]$ be the ideal generated by $U_T$ and, for every $\tau \in K$, let $\id i_\tau\subseteq \mathbb K[\mathbf x]$ be the ideal generated by the set $U_\tau$ obtained specializing $T$ to $\tau$. 

The fibers of the morphism  $Z=\Spec (K[\mathbf x,T]/\id i_T)  \rightarrow \mathbb A^1_K $ induced by  the embedding  $K[T] \hookrightarrow K[\mathbf x,T]/\id i_T$, form a family in the sense of \cite[Chapter II, Section 3, Definition at page 89]{H} and \cite[beginning of Chapter 6]{Eisenbud}. 
For every $\tau\in \mathbb A^1_K$, the fiber $Z_\tau$ over $\tau$ is the scheme $\mathrm{Spec}(K[\mathbf x]/\id i_\tau)$. 

By direct computations, we check that for all $\tau \in K$ the fibers $Z_\tau$  have Hilbert polynomial 12, hence correspond to points of the same Hilbert scheme  $\hilb^5_{12}$. Moreover, the ideals $\id i_\tau$ belong to $\Mf(\mathcal P(\id j),3)$, where $\id j$ is the strongly stable ideal generated by the following terms: $\quad
x_5^2,x_{{4}}x_{{5}},x_{{4}}^{2}, $ $x_{{3}}x_{{5}},x_{{3}}x_{{
4}},x_{{3}}^{2},x_{{2}}x_{{5}},x_{{2}}x_{{4}},x_{{1}}x_{{5}},x_{{1}}
x_{{4}},x_{{2}}^{2}x_{{3}},x_{{2}}^{3},x_{{1}}x_{{2}}x_{{3}},x_{{1}}x_{{
2}}^{2},x_{{1}}^{2}x_{{3}},x_{{1}}^{2}x_{{2}},$ $x_{{1}}^{4}.$

\noindent However, the family $Z \rightarrow \mathbb A^1_K $ is not flat. We can prove this fact  in several ways. 

For instance, we can apply the criterion of Artin described in \cite[Corollary to Proposition 3.1]{Artin} (see also \cite{BaMu}). In fact, if we take the restrictions of the syzygies $(h_1,\dots,h_{10})\in K[\mathbf x,T]^{10}$ of the polynomials $\id f_1,\dots,\id f_{10}(T)$ at  $T=0$, we see  that they do not generate the module of syzygies of the polynomials $\id f_1,\dots, \id f_{10}(0)$. For example,   $(0,0,0,0,x_4,-x_2+x_4,0,-x_2,0, 2x_1+2x_4-4x_5)$ is a  syzygy of the polynomials $\id f_1,\dots,\id f_{10}(0)$ that does not lift to a syzygy of   $\id f_1,\dots,\id f_{10}(T)$. 

Moreover, it is noteworthy that, for $\tau=0$, $\Proj(K[x_0,\mathbf x]/(\id i_0)^h)$ is a Gorenstein scheme in $\mathbb P^5_{K}$, while for other values of $\tau$ in $K$ the scheme $\Proj(K[x_0,\mathbf x]/(\id i_\tau)^h)$ is not Gorenstein. Since Gorenstein schemes constitute an open subset of $\hilb^5_{12}$ \cite[Theorem 3.31]{IE78}, this fact confirms that $Z  \rightarrow \mathbb A^1_K$ is not flat.

Finally, let $Y$ be the set of points in $\hilb^5_{12}$ that correspond to the $K$-fibers of the morphism $Z \rightarrow \mathbb A^1_K $, namely the schemes $\Proj(K[x_0,\mathbf x]/(\id i_\tau)^h)$ for $\tau \in K$. We see that $Y$ is embedded in $\hilb^5_{12}$ by a parameterization with parameter space $\{P\} \cup  (\mathbb A^1_{K} \setminus \{0\})$, where $P$ is an isolated point. Indeed, we compute the $[\mathcal P(\id j),3]$-marked bases of the ideals $\id i_\tau$ and find out that $Y$ is the set of $K$-points of a reducible scheme that is the union of two irreducible components. The first component is an isolated point $\{P\}$ and corresponds to the case $\tau=0$. The second component is isomorphic to $\mathbb A^1_{K} \setminus \{0\}$ and corresponds to the cases $\tau\in K\setminus\{0\}$. Then, we apply Theorem  \ref{th:flatness} and obtain a flat family whose $K$-fibers are the schemes defined by the ideals $\id i_\tau$, but the parameter space is $\{P\} \cup  (\mathbb A^1_{K} \setminus \{0\})$ and is not $\mathbb A^1_{K}$.
%
\end{Example}


\section{Algorithms}\label{algoritmi}

In this section, we describe an algorithm for computing the affine scheme representing the functor $\underline{\mathrm{Mf}}_{\mathcal P(\id j),m}$. 
Recall that if $\id j$ is a quasi-stable ideal, then its Pommaret basis can be explicitly computed, for instance, by \cite[Algorithm Involutive Completion]{GB}. Furthermore, the regularity $\reg(\id j)$ and the satiety $\sat(\id j)$ can be computed by Lemma \ref{lemma:importante} \eqref{importante_00} and \eqref{importante_0}.
Let us suppose that the following functions are available:

\noindent$\bullet$ \textsc{Coeff}$(g,x^\gamma)$. It returns the coefficient of the monomial $x^\gamma$ in the polynomial $g$. 

\noindent$\bullet$ $\textsc{LowerPart}(g,t)$. Given a polynomial $g$ and a non-negative integer $t$, it returns the pair of polynomials $(g_1,g_2)$ such that $g=g_1+g_2$, $\deg(g_1)\leq t$ and, for every $x^\gamma \in\supp(g_2)$, $\vert\gamma\vert\geq t+1$.

\noindent$\bullet$ $\textsc{Reduction}(g, \id G)$. Given a $[\id j,m]$-marked set $\id G$ and a polynomial $g$, it returns the polynomial $h$ such that  $g \SGred_\ast h$, $h\in \langle  \cN(\id j)\rangle$ (according to Definition \ref{relazione affine}).

\noindent$\bullet$ $\textsc{LowTerms}(\id j,m)$. It determines the set of terms in $\id j_{\leq m}$.

\begin{algorithm}[H]
\begin{algorithmic}[1]
\STATE $\textsc{Reduction1}(\id j, m, G)$
\REQUIRE $\id j \subset K[\bf x]$ quasi-stable ideal, $m$ a non-negative integer, $\id G$ a $[\mathcal P(\id j),m]$-marked set.
\ENSURE conditions to impose on the polynomials in $\id G$   to fulfill Theorem \ref{critsupermin} (\ref{critsupermin_i}).
\STATE $\textsf{Equations1} \leftarrow \emptyset$;
\STATE $B\leftarrow \textsc{LowTerms}(\id j,m)\setminus \mathcal P(\id j)$;
\FORALL{$x^\beta \in B$}
\STATE $h\leftarrow\textsc{Reduction}(x^\beta,\id G)$;
\STATE $(h_1,h_2)\leftarrow\textsc{LowerPart}(h,m)$;
\FORALL{$x^\gamma \in \supp(h_2)$}
\STATE $\textsf{Equations1}\leftarrow \textsf{Equations1}\cup \{\textsc{Coeff}(h_2,x^\gamma)\}$;
\ENDFOR
\ENDFOR
\RETURN $\textsf{Equations1}$;
\end{algorithmic}
\end{algorithm}

\begin{algorithm}[H]
\begin{algorithmic}[1]
\STATE $\textsc{Reduction2}(\id j, m, G)$
\REQUIRE $\id j \subset K[\bf x]$ quasi-stable ideal, $m$ a non-negative integer, $\id G$ a $[\mathcal P(\id j),m]$-marked set.
\ENSURE  conditions to impose on the polynomials in $\id G$   to fulfill Theorem \ref{critsupermin} (\ref{critsupermin_ii}).
\STATE $\textsf{Equations2} \leftarrow \emptyset$;
\FORALL{$f_\alpha \in \id G, x_i > \min(x^\alpha)$}
\STATE $h\leftarrow\textsc{Reduction}(x_if_\alpha,\id G)$;
\FORALL{$x^\gamma \in \supp(h)$}
\STATE $\textsf{Equations2}\leftarrow \textsf{Equations2}\cup \{\textsc{Coeff}(h,x^\gamma)\}$;
\ENDFOR
\ENDFOR
\RETURN $\textsf{Equations2}$;
\end{algorithmic}
\end{algorithm}

\begin{algorithm}[H]\label{algschemaaffine}
\begin{algorithmic}[1]
\STATE $\textsc{MarkedScheme}(\id j, m)$
\REQUIRE $\id j \subset K[x_1,\ldots,x_n]$ quasi-stable ideal, $m$ a non-negative integer.
\ENSURE an ideal defining the marked scheme $\Mf(\id j, m)$.
\STATE \label{line:minm}$m_0\leftarrow \min\{m,\sat(\id j)-1\}$;
\STATE $\id G \leftarrow \emptyset$;\label{line:startDefMSet}
\STATE $C\leftarrow \emptyset$;
\FORALL{$x^\alpha \in \mathcal P(\id j)$}
\STATE $f_{\alpha} \leftarrow x^\alpha$;
\FORALL{$x^\eta \in \mathcal N(\id j_{\leq \max\{m_0,\vert\alpha\vert\}})$}
\STATE $f_{\alpha} \leftarrow f_{\alpha} + {C}_{\alpha\eta} x^\eta$;
\STATE $C\leftarrow C\cup \{C_{\alpha\gamma}\}$;
\ENDFOR
\STATE ${\id G} \leftarrow \id G\cup \{f_{\alpha}\}$;
\ENDFOR\label{line:endDefMSet}
\STATE $\textsf{Equations} \leftarrow \emptyset$\label{line:defEq}
\STATE $\textsf{Equations}\leftarrow \textsc{Reduction2}(\id j,m_0,\id G)$\label{line:condSpoly}
\IF{$m_0{=} \reg(\id j)-1$ \AND $\mathcal N(\id j_{\leq \reg(\id j)-1})=\mathcal N(\id j_{\leq \reg(\id j)})$}\label{line:check0dim}
\RETURN $C,\textsf{Equations}$\label{line:firstReturn}
\ENDIF\label{line:endCheckdim0}
\STATE $\textsf{Equations}\leftarrow \textsf{Equations} \cup \textsc{Reduction1}(\id j,m_0,\id G)$\label{line:condjm}
\RETURN $C,\textsf{Equations}$\label{line:2return}
\end{algorithmic}
\end{algorithm}

\begin{theorem}\label{th:algoritmo}
Algorithm $\textsc{MarkedScheme}(\id j, m)$ returns the set $C$ of variables of a polynomial ring over $K$ and the generators of the ideal $\id U$ which defines the representing scheme $\Spec(K[C]/\id U)$ of the functor $\underline{\mathrm{Mf}}_{\mathcal P(\id j),m}$.
\end{theorem}

\begin{proof}
We analyze the command lines of Algorithm $\textsc{MarkedScheme}(\id j, m)$.

At line \ref{line:minm}, if $m\geq \rho-1$ then the algorithm computes the scheme representing $\underline{\mathrm{Mf}}_{\mathcal P(\id j),\rho-1}$ which is isomorphic to that representing $\underline{\mathrm{Mf}}_{\mathcal P(\id j),m}$ by Proposition \ref{prop:rho}.  
If $m<\rho-1$, then the algorithm computes exactly the scheme representing $\underline{\mathrm{Mf}}_{\mathcal P(\id j),m}$.

From line \ref{line:startDefMSet} to line \ref{line:endDefMSet}, the algorithm constructs the $[\id j,m_0]$-marked set $\id G$ as in \eqref{eq:MSwithParam} and the set of parameters as in \eqref{eq:parameters}.

At line \ref{line:condSpoly}, using Algorithm $\textsc{Reduction2}$, the algorithm computes and collects in the set \textsf{Equations}, which has been initialized at line \ref{line:defEq}, the coefficients belonging to $K[C]$ of the polynomials $h_{i\alpha}$, for every $x^\alpha \in \mathcal P(\id j)$ and every $x_i>\min(x^\alpha)$. 

From line \ref{line:check0dim} to line \ref{line:endCheckdim0}, the algorithm checks if $\mathcal N(\id j)$ is finite. If this is the case, the sets $C$ and \textsf{Equations} are returned (line \ref{line:firstReturn}).  Thanks to Proposition \ref{critpunti} and Theorem \ref{thm:schemamarcatoA}, the scheme $\Spec(K[C]/(\textsf{Equations}))$ represents $\underline{\mathrm{Mf}}_{\mathcal P(\id j),m_0}$.

Else, at line \ref{line:condjm} the algorithm also computes and adds to the set \textsf{Equations} the coefficients in $K[C]$ of the polynomials $h_{\beta}^{(>m)}$, for every $x^\beta \in \id j_{\leq m}\setminus \mathcal P(\id j)$, using Algorithm $\textsc{Reduction1}$.

Finally, at line \ref{line:2return} the algorithm returns the sets $C$ and \textsf{Equations}, which define the scheme $\Spec(K[C]/(\textsf{Equations}))$ representing $\underline{\mathrm{Mf}}_{\mathcal P(\id j),m_0}$ by Theorem \ref{thm:schemamarcatoA}.
\end{proof}

In next example we explicitly investigate some features of a Hilbert scheme of curves by the techniques developed in the previous sections, hence by applying our algorithm \textsc{MarkedScheme}. 

\begin{Example} \label{ex:tommasino}
Let us consider the Hilbert scheme $\HilbScheme{3t+2}{3}$ over a field of characteristic zero. There are $4$ saturated Borel-fixed ideals corresponding to points on this Hilbert scheme, which can be computed by the algorithm presented in \cite{CLMR}, improved in \cite{L} and generalized in \cite{B14}:
$$\mathfrak b_1:=(x_{{3}},x_{{2}}^{4},x_{{1}}^{2}  x_{{2}}^{3} ), \quad \mathfrak  b_2:=(x_{{3}}^{2},x_{{2}}x_{{3}},x_{{1}}x_{{3}},x_{{2}}^{4},x_{{1}} x_2^3),$$
$$\mathfrak  b_3:=( x_{{3}}^{2},x_{{2}}x_{{3}},x_{{2}}^{3},x_{{1}}^{2}x_{{3}}), \quad\mathfrak  b_4:=( x_{{3}}^{2},x_{{2}}x_{{3}},x_{{2}}^{3},x_{{1}}x_{{2}}^{2}).$$
The ideal $\mathfrak b_1$ corresponds to the lex-segment point of $\HilbScheme{3t+2}{3}$. It is well-known that such a point belongs to a unique component, which is rational. We denote this component by $Y_1$. By a direct computation, we find that  the dimension of $Y_1$ is $18$ and its general point corresponds to the union of a plane cubic curve and two isolated points. By \cite[Theorem 6]{RA}, also $\mathfrak b_2$ and $\mathfrak b_3$ define points of $Y_1$, while $\mathfrak b_4$ does not.

By our methods, we explicitly construct the open subset  $\Mf(\mathcal P(\mathfrak b_4),2)$ of $\HilbScheme{3t+2}{3}$ as an affine subscheme of $\mathbb A^{38}_K$, defined by $53$ equations which can be computed by the Algorithm \textsc{MarkedScheme}. By a direct computation on the ideal generated by these $53$ equations, we find that $\mathfrak b_4$ is contained in two irreducible components $Y_2$ and $Y_3$. We find that the dimension of $Y_2$ is $12$ and its general point corresponds to the disjoint union of a conic and a line, while the dimension of $Y_3$ is $15$ and its general point corresponds to the union of a twisted cubic curve and a point.
For more details about $Y_2$ and $Y_3$ we refer to \cite[Example 6.8]{BCR}.
\end{Example}

\subsection{Comparison with similar methods}
\label{subsec:comparison}

In summary, our algorithm \textsc{MarkedScheme} essentially consists in a rewriting procedure particularly suitable for the computation of the scheme parameterizing the family of marked bases over a given 
quasi-stable ideal $\mathfrak j\subset \Ax$. The computation depends on the Pommaret basis of the ideal $\mathfrak j$ and on a given integer $m$ that guarantees that this marked family is parameterized by a finite number of parameters. Recall that this family is embedded in a Hilbert scheme (see Proposition \ref{prop:immersione}). 

There are essentially two other rewriting procedures already used for the same purpose, which are described respectively in \cite{BCLR} and in \cite{CMR13}. More precisely, the method of \cite{BCLR}, called superminimal reduction, computes the same scheme as our algorithm \textsc{MarkedScheme} but only provided that $\mathfrak j$ is a strongly stable ideal (see Remark \ref{rem:funt}), constructing marked bases over $J_{\geq m}=(\mathfrak j)^h_{\geq m}\subset \Aox$. The method of \cite{CMR13} computes 
an affine scheme that parameterizes a $\mathcal P(J)$-marked family for every quasi-stable ideal $J\subset \Aox$. In particular, it is able to compute a scheme that is isomorphic to the scheme which is computed by \textsc{MarkedScheme}, because it constructs $\mathcal P(J_{\geq m})$-marked bases for every quasi-stable ideal $\mathfrak j$.

We now highlight the main differences between \textsc{MarkedScheme} and the method of \cite{CMR13}. First, the number of parameters which are involved in the computation of  the ideal defining the $\mathcal P(J_{\geq m})$-marked scheme following the lines of \cite{CMR13} is higher than the one in Algorithm \textsc{MarkedScheme}. Indeed, in order to compute the ideal defining the $\mathcal P(J_{\geq m}$)-marked scheme as in \cite{CMR13}, we need to use the set of parameters 
$$C'=\{C'_{\alpha\eta}\}_{x^\alpha\in \mathcal P(J_{\geq m}),x^\eta\in \mathcal N(J_{\geq m})_{\vert\alpha\vert}},$$ 
which in general strictly contains the set $C$ as in \eqref{eq:parameters}. 
Thus, the computation of \cite{CMR13} involves more variables, and the result coincides with that of \textsc{MarkedScheme} only after the elimination of the exceeding variables.
Second, the number of polynomials that have to be reduced by $\xrightarrow{ \ G^{(\ell)}\ }_\ast$ in order to define the scheme according to the technique of \cite{CMR13} is very high. Indeed, for every $x^\beta\in \id j_{\leq m}\setminus \mathcal P(\id j)$, such that $\vert\beta\vert<m$, there is $x^{\beta'}=x^\beta x_0^t\in \mathcal P(J_{\geq m})$, with $t=m-\vert\beta\vert\geq 1$, by Proposition \ref{prop:PBtronc}\eqref{prop:PBtronc_i}. Hence, the algorithm suggested by \cite[Definition 6.1]{CMR13}, according to Theorem \ref{th:varie CMR} \eqref{varie CMR_iv}, computes the polynomial reduction of $n$ polynomials $x_iF_{\beta'}$, $i\in \{1,\dots,n\}$, while in Algorithm \textsc{MarkedScheme}, only the reduction by $\xrightarrow{\ \id G_{\mathcal P} }_\ast$  of the term $x^\beta$ is computed. 

On the other hand, it may happen that the algorithm of \cite{CMR13} is able to conclude its computation  in a very larger affine space, while in the same case the algorithm \textsc{MarkedScheme} does not return the output in a reasonable time (see Example \ref{ex:confronti}). However, \textsc{MarkedScheme} intrinsically involves also the elimination procedure of the unnecessary parameters, hence a direct comparison of the execution timings with the algorithm in \cite{CMR13} should include the elimination of variables from the output of \cite{CMR13}. 

In conclusion, either one of the two algorithms can be more advantageous, depending on the purpose, as we observe in the following example in which we consider again the study described in Example \ref{ex:tommasino}, but from different points of view. 

\begin{Example}\label{ex:confronti}
For the Hilbert scheme $\hilb_{3t+2}$ considered in Example \ref{ex:tommasino}, we succeeded in completing the computation of the marked family over the ideal $\mathfrak b_1$ corresponding to the lex-segment point by using an implementation of the method described in \cite{CMR13} for the strongly stable case. But, we succeeded in the detection of the two irreducible components $Y_2$ and $Y_3$ of the marked family over the ideal $\mathfrak b_4$ thanks to the more {\em concise} representation of the marked scheme obtained by the procedure \textsc{MarkedScheme}. Moreover,
for the ideals $\mathfrak b_1$ and $\mathfrak b_2$ we obtain the following performance, where:
\begin{itemize}
\item the execution timings are in seconds;
\item NF means \lq\lq after more than $90$ hours we stopped the execution";
\item \lq\lq construction" is the execution of the algorithm;
\item \lq\lq elimination" means the elimination of the exceeding variables and/or of possible other eliminable variables after the construction.
\end{itemize} 
We used Maple 18 under Windows 7 professional (64 bit), on a computer with a 3.50 GHz Intel Xeon processor and 8 Gb RAM:
$$
\begin{tabular}{l}
\begin{tabular}{l|r|r|r|r|r}
\multicolumn{6}{c}{MS=\textsc{MarkedScheme}, CMR= method of \cite{CMR13}}
\vspace{2mm}
\\
\hline
& $m$ & $\#$ param & $\#$ eq &construction & elimination \\
\hline
$\mathfrak b_1$ MS & 4&  45&  &  NF &  \\
$\mathfrak b_1$ CMR& 4&  311& 822&  0.9 & NF \\
\hline
$\mathfrak b_2$ MS & 3&  61& 126&  0.3 & 19.5 \\
$\mathfrak b_2$ CMR& 3& 127& 289&  0.3 &12767.7\\
\hline
\end{tabular}
\end{tabular}
$$
\end{Example}

\medskip
Concerning the method of \cite{BCLR}, first we observe that we work under the more general hypothesis that $\id j$ is a quasi-stable ideal, while the results of \cite{BCLR} are proved under the hypothesis that the considered monomial ideals are strongly stable. Anyway, the difference between \textsc{MarkedScheme} and the method of \cite{BCLR} essentially consists in the fact that the reduction process $\xrightarrow{\ \id G_{\mathcal P}\ }$ that we use to construct the scheme is much simpler to implement and execute than the superminimal reduction for homogeneous polynomials. Indeed, each step of superminimal reduction needs a multiplication by a power of the smallest variable $x_0$ (see \cite[Definition 3.2 and Theorem 3.5]{BCLR} for details). 
The exponent of this power can be very large and the subsequent multiplications might become a bottleneck for the time of execution of algorithms using this polynomial reduction. In the affine setting this problem corresponds to the fact that, in the course of the reduction, polynomials with very high degree  can be involved. This problem is strictly related to an evaluation of the computational cost that we address in the next subsection. 

\subsection{Some complexity results}

Like the methods of \cite{BCLR,CMR13}, our algorithm is based on a rewriting procedure that for every polynomial $f$ recognizes the unique decomposition $f=\sum p_i g_i+\Nf(f)$ by using a marked basis $\{g_1,\dots,g_t\}$. Hence, as for the computation of Gr\"obner bases, the computational cost of these methods could be described in terms of the computational cost of the polynomial ideal membership problem (see \cite{BSsizigie,Mayr} and the references therein). In this context, the key point is in fact the knowledge of bounds on the degrees of the polynomials $p_i$ involved in the rewriting procedure (see \cite{Hermann,Lazard} and the references therein). Generally, these bounds depend on the maximal degree of the polynomials $g_i$ and $\sum p_i g_i$ and on the number $n$ of variables of the polynomial ring over a field. 

As it happens in our computation of marked schemes, we have to consider also the case in which the polynomials $g_1,\dots,g_t$ belong to $K[C][\mathbf x]$ and do not form a marked basis yet.
Anyway, even when $g_1,\dots,g_t$ form a marked basis, we could have some problems. We know that the degree of $\Nf(f)$ is $\leq\max\{\deg(f),m\}$ and so also the degree of $\sum p_i g_i$ is $\leq\max\{\deg(f),m\}$. However, as we have already outlined, during the reduction $\xrightarrow{\id G_{\mathcal P}}$, polynomials $p_ig_i$ of degree higher than $\max\{\deg(f),m\}$ can be involved, but this might be not apparent when looking at $\sum p_ig_i$, due to some term cancellations, as the following example shows.

\begin{Example}\label{nonomognonbasta}
Consider $\id j=(x_2,x_1^t)\subseteq K[x_1,x_2]$ and, for every integer $t\geq 2$, the polynomials $f_1:=x_2-cx_1^{t-1}$, with $c\in K^\ast$, and $f_2:=x_1^t$. The set $\mathfrak G:=\{f_1,f_2\}$ is a $[\id j,t]$-marked basis.
The ring $K[x_1,x_2]/\mathfrak j$ is Artinian, hence the normal form of every term in $\mathfrak j_{\leq t}$ is always null by Theorem \ref{critsupermin} and Proposition \ref{critpunti}. In particular, we have $\Nf(x_2^t)=0$ because  
$x_2^t=(\sum_{i=0}^{t-1} c^i x_2^{t-1-i} x_1^{i(t-1)}) f_1+c^t f_2^{t-1}$, where the last addend is $c^t x_1^{t(t-1)}$. Hence, the maximal degree in the variables $x_1,x_2$ is $t(t-1)$.
\end{Example}

Eventually, we obtain the following result that gives a bound for the maximal degree of the polynomials occuring in the reduction of a term $x^\beta$ by our method, more precisely for the maximal degree of the polynomials $p_ig_i$ when $\mathfrak G=\{g_1,\dots,g_t\}=\{f_{\alpha_i}\}_{x^{\alpha_i}\in \mathcal P(\mathfrak j)}$ is a $[\mathcal P(\mathfrak j),m]$-marked set. To reach this aim we will consider sequences of terms
$\{x^{\eta_0},x^{\eta_1,}\dots,x^{\eta_s}\}$
such  that for every $0 \leq i <s$
\begin{equation}\label{eq:condition}
x^{\eta_i}=x^{\delta_i} x^{\alpha_i} \in \mathcal C_{\mathcal P}(x^{\alpha_i}), \quad x^{\eta_{i+1}} \in \mathrm{supp}(x^{\eta_i}-x^{\delta_i}f_{\alpha_i}).
\end{equation}  

\begin{theorem}\label{th:bound}
Let $\mathfrak G\subset \Ax$ be a $[\mathcal P(\mathfrak j),m]$-marked set and $d$ the maximal degree of the polynomials in $\mathfrak G$. Then, the maximal degree of a polynomial occurring in the reduction of a term $x^\beta $ of degree $h$ by $\xrightarrow{\  \id G_{\mathcal P}\ }$ is lower than or equal to $(h-1)d^{n-1}+d$. 

Moreover, for every sequence $\{x^{\eta_i}\}_{i=0,\dots,s}$ satisfying condition \eqref{eq:condition} we obtain 
$$s \leq \left\{\begin{array}{ll} h &\text{ if } d=1 \\ \frac{(h-1)(d^n-1)}{d-1} &\text{ if } d>1. \end{array}\right.$$
\end{theorem}

\begin{proof}
By the hypothesis we have $d \leq \max\{m, \vert x^\alpha \vert : \ x^\alpha \in \mathcal P(\mathfrak j)\}$.
By Proposition \ref{th:reductionAffineSet}, we can take a sequence $\{x^{\eta_i}\}_{i=0,\dots,s}$ of terms satisfying condition \eqref{eq:condition}, i.e.~for every $0\leq i \leq s-1$, $x^{\eta_{i+1}}$ appears in the first step of reduction of $x^{\eta_i}$ by $\xrightarrow{\  \id G_{\mathcal P}\ }$ and belongs to $\mathfrak j$. Thus, for every $0\leq i \leq s-1$, there is a term $x^{\alpha_i}$ of the Pommaret basis $\mathcal P(\mathfrak j)$ such that $x^{\eta_i}=x^{\alpha_i} x^{\delta_i}\in \mathcal C_{\mathcal P}(x^{\alpha_i})$. We assume $x^{\eta_0}=x^\beta$.

By Lemma \ref{lemma:importante}\eqref{importante_iii}, we have $\max(x^{\delta_i})\geq \max(x^{\delta_{i+1}})$ and if the equality holds then the exponent of $\max(x^{\delta_i})$ in $x^{\delta_i}$ is higher than its exponent in $x^{\delta_{i+1}}$. 

Observe that if $d=1$ then all polynomials of $\mathfrak G$ have degree $1$ and $h \geq \vert x^{\eta_i}\vert$ for every $i\in \{0,\dots,s\}$ by construction. Moreover, in this case we obtain also $s\leq h$ by construction. Hence, let $d$ be $\geq 2$.

We first consider the special case $\max(x^{\delta_i})=x_v$ for every $i\in \{0, \dots, s-1\}$ and either $\max(x^{\delta_s})<x_v$ or $x^{\eta_s}\notin \id j$. Note that $s\leq \vert x^{\eta_0}\vert- \vert x^{\alpha_0}\vert$ because the exponent of $x_v$ in $x^{\delta_i}$ strictly decreases at every step. By construction we have $\vert x^{\eta_{i+1}}\vert \leq \vert x^{\eta_{i}}\vert +d-1$, for every $0 \leq i < s$. Thus, we obtain 
\begin{equation}\label{formulagrado}
\vert x^{\eta_s}\vert \leq \vert x^{\eta_0}\vert + (d-1 )( \vert x^{\eta_0}\vert- \vert x^{\alpha_0 }\vert)=  d\vert x^{\eta_0}\vert-(d-1)  \vert x^{\alpha_0 }\vert.
\end{equation}
We can summarize  the bounds for a sequence of the above special type as follows: 
\begin{enumerate}
\item \label{fatto1} the maximal degree of the terms  is lower than or equal to $d$ times the degree of the first term minus $d-1$; 
\item  \label{fatto2} $s \leq$ degree of the first term minus $1$.
\end{enumerate}
Let us now consider the general case. Observe that the sequence $\{x^{\eta_i}\}_{i=0,\dots,s}$ is the union of at most $n$ consecutive sub-sequences of the above special type, because we have $n$ variables and $\max(x^{\delta_i})\geq \max(x^{\delta_{i+1}})$, where if the equality holds then the exponent of $\max(x^{\delta_i})$ in $x^{\delta_i}$ is higher than its exponent in $x^{\delta_{i+1}}$. We assume $x^{\eta_s}\notin \mathfrak j$ because every other sequence can be extended to a sequence with this property.

For convenience, let $\ell_{n+1}:=0$ and $h_{n+1}=h$. Moreover, for every variable $x_v\geq x_1$, let ${\ell_v}:=\min\{i\in\{0,\dots,s\} \ : \ \max(x^{\delta_{i}})<x_{v} \text{ or } x^{\eta_i}\notin \mathfrak j\}$ and let us denote by $h_v$ the maximal degree of the terms $x^{\eta_i }$ with $i\leq \ell_v$. 
Note that $h=h_{n+1}\leq h_n \leq \dots \leq h_2\leq h_1$ and $\ell_1=s$. In particular, if $\max(x^\beta)=x_j$ then we have $h=h_{n+1}= \dots =h_{j+1}$. We first consider the sub-sequences of the above special type $\{x^{\eta_i}\}_{i=\ell_{v+1},\dots,\ell_v}$ for every $v \in \{2,\dots,n\}$.
By \eqref{fatto1} for every $v\in\{2,\dots,n\}$ we have $h_v \leq (h_{v+1}-1)d+1$ and by recursion
$$h_v \leq (h-1)d^{n-v+1} +1.$$
In particular, we obtain $h_2\leq (h-1)d^{n-1}+1$.

If $x^{\eta_{\ell_2}} \notin \id j$ then we can conclude.  If $ x^{\eta_{\ell_2}} \in \id j$ then we consider the last sub-sequence $\{x^{\eta_i}\}_{i=\ell_2,\dots,s}$. In this case, for every $\ell_2 \leq i\leq s-1$ we have $ x^{\delta_{i}}= x_{1}^{c_i}$ for some  non-negative integers $c_i $ with $c_{\ell_2} =\vert x^{\eta_{\ell_2}} \vert - \vert x^{\alpha_{\ell_2}} \vert\leq   h_2-1  $ and $c_{\ell_2}>c_{\ell_2+1} \dots >c_{s-1}$ by Proposition \ref{th:reductionAffineSet}. Thus for every $\ell_2 < i \leq s$ we have $\vert x^{\eta_{i}}\vert\leq d+c_{\ell_2}\leq d+ h_2-1$ by construction.

Therefore, the maximal degree of the terms in the whole sequence is lower than or equal to $(h-1)d^{n-1}+d$.

Let us now consider the number $s$ of steps of the reduction $\xrightarrow{\  \id G_{\mathcal P}\ }$ we need to obtain the term $x^{\eta_s}$ from the term $x^{\eta_0}$ of the sequence $\{x^{\eta_i}\}_{i=0\dots,s}$. By \eqref{fatto2}, for every $v\in \{2,\dots,n\}$ we consider the sub-sequence $\{x^{\eta_i}\}_{i=\ell_{v+1},\dots,\ell_v}$ and have $\ell_v- \ell_{v+1} \leq \vert x^{\eta_{\ell_{v+1}}} \vert -1 \leq h_{v+1}-1$. Moreover, we have already proved that $\ell_1-\ell_2=s-\ell_2 \leq h_2-1$. Then, $s\leq \sum_{v=1}^{n} (h_{v+1}-1)$ and we obtain the claimed bound.
\end{proof}

\begin{Remark}\label{rm:sharp}
We observe that the first bound of Theorem \ref{th:bound} is sharp by exhibiting the unique case in which the inequality is an equality, beyond the banal case $x^\beta=1 \in\mathfrak j$. 
We can have an equality only if:
\begin{itemize}
\item there are exactly $n$ sub-sequences of special type, namely $\ell_{v+1}\neq \ell_v$ for every $v > 1$;
\item we have an equality in \eqref{formulagrado} for every $v$, i.e.~$\vert x^{\alpha_i}\vert =1$ for every $i$.
\end{itemize}
Hence, $x_n, \dots, x_2$ must belong to $\id j$ and $\mathcal N(\id j)_{\leq d}\subseteq \{1, x_1, \dots, x_1^d\}$. Moreover,
\begin{itemize}
\item at every step, $x^{\eta_i}$ is replaced by a term in $\mathcal N(\id j)_{ d}$, i.e. by $x_1^{d}$.  
\end{itemize}
With these constraints, every variable in $x^{\eta_0}$ is replaced by $x_1^d$ and we obtain $hd$ as maximal degree. Therefore, the only cases in which we have an equality are those presented in next Example \ref{nonomognonbasta2} because the two integers $(h-1)d^{n-1}+d$ and $hd$ are equal only if $n=2$. 
\end{Remark}

\begin{Example}\label{nonomognonbasta2}
Let $h,d$ be any two positive integers. Consider $\id j=(x_2)\subseteq K[x_1,x_2]$ and the polynomial $f_1:=x_2-\sum_{i=0}^d  a_i x_1^{i}$ for some coefficient $a_i \in K$, $a_d\neq 0$. The set $G:=\{f_1\}$ is a $[\mathcal P(\id j),d]$-marked basis.
The normal form of the term $x_2^h$ is the polynomial  $(\sum_{i=0}^d  a_i x_1^{d_i})^h$ whose maximal degree is $hd=(h-1)d^{n-1}+d$, where $n=2$ is the number of variables. 
\end{Example}

Let $\mathfrak G\subset K[C][\mathbf x]$ be the $[\mathcal P(\mathfrak j),m]$-marked set which we defined in \eqref{eq:MSwithParam} and let $d$ be the maximal degree of the polynomials of $\mathfrak G$ in the variables $\mathbf x$.

\begin{Corollary}\label{cor:grado equazioni}
The maximal degree of the generators of the ideal $\mathfrak U$ that are computed by the algorithm \textsc{MarkedScheme} is $\leq \frac{d(d^n-1)}{d-1}$.
\end{Corollary}

\begin{proof}
At every step of the rewriting procedure $\xrightarrow{\  \id G_{\mathcal P}\ }$ that the algorithm \textsc{MarkedScheme} performs starting from the polynomials of $\mathfrak G$, the degree of the coefficients in the variables $C$ increases at most by $1$. Then, we can conclude because by Theorem \ref{th:bound} we have at most $\frac{(h-1)(d^n-1)}{d-1}$ steps, where $h$ is the maximal degree of the terms to be reduced, that in this case is $\leq d+1$ by Theorem \ref{critsupermin}. 
\end{proof}

\begin{Example}\label{nonomognonbasta3}
Let $h,d$ be any two positive integers.  Consider $\id j=(x_2,x_1^{d})\subseteq K[x_1,x_2]$ and  the polynomials $f_1:=x_2-x_1^{d-1}, f_2:=x_1^d-x_1^{d-1}$. The set $G:=\{f_1,f_2\}$ is a $[\mathcal P(\id j),d-1]$-marked basis with polynomials of maximal degree $d$.
In order to obtain the normal form of the term  $x_2^h$ we perform $h$ steps of reduction by $f_1$, getting $x_1^{h(d-1)}$, and then $h(d-1)-(d-1)$ steps of reduction by $f_2$ getting the normal form $x_1^{d-1}$. Therefore the total number of steps is $(h-1)d+1$. 
\end{Example}

\section*{Acknowledgements}

The authors thank the anonymous referees for their constructive criticism.
 
\bibliographystyle{abbrv}

\end{document}